\def\one{\mbox{1\hspace{-4.25pt}\fontsize{12}{14.4}\selectfont\textrm{1}}} % 11pt    
\DeclarePairedDelimiterXPP\pk[1]{\mathbb{P}}\{ \}{}{ #1}
\DeclarePairedDelimiterXPP\E[1]{\mathbb{E}}\{ \}{}{	#1}
\NewDocumentCommand{\ceil}{s O{} m}{%
	\IfBooleanTF{#1} % starred
	{$\left\lceil#3\right\rceil$} % \ceil*[..]{..}
	{#2\lceil#3#2\rceil} % \ceil[..]{..}
}
\NewDocumentCommand{\floor}{s O{} m}{%
	\IfBooleanTF{#1} % starred
	{$\left\lfloor#3\right\rfloor$}
	{#2\lfloor#3#2\rfloor}
}
\definecolor{c20}{rgb}{0.,0.7,0.}
\definecolor{c30}{rgb}{0.,0.,1.}
\definecolor{c40}{rgb}{1,0.1,0.7}
\definecolor{c50}{rgb}{1,0,0}
\definecolor{c60}{rgb}{1,0.9,0.1}
\definecolor{c70}{rgb}{0.50,1.00,0.00}
\numberwithin{equation}{section}
\newtheorem{theo}{Theorem}[section]
\newtheorem{sat}[theo]{Proposition}
\newtheorem{de}[theo]{Definition}
\newtheorem{lem}[theo]{Lemma}
\newtheorem{example}[theo]{Example}
\newtheorem{korr}[theo]{Corollary}
\newtheorem{remark}[theo]{Remark}
\numberwithin{equation}{section}
\newtheorem{theorem}{Theorem}[section]
\newtheorem{lemma}{Lemma}[section]
\newcommand{\COM}[1]{}
\newcommand{\R}{\mathbb{R}}
\newcommand{\BQN}{\begin{eqnarray}}
\newcommand{\EQN}{\end{eqnarray}}
\newcommand{\BQNY}{\begin{eqnarray*}}
\newcommand{\EQNY}{\end{eqnarray*}}
\newcommand{\BS}{\begin{sat}}
	\newcommand{\ES}{\end{sat}}
\newcommand{\BT}{\begin{theo}}
	\newcommand{\ET}{\end{theo}}
\newcommand{\BK}{\begin{korr}}
	\newcommand{\EK}{\end{korr}}
\newcommand{\BEX}{\begin{example}}
	\newcommand{\EEX}{\end{example}}
\newcommand{\BD}{\begin{de}}
	\newcommand{\ED}{\end{de}}
\newcommand{\BIT}{\begin{itemize}}
	\newcommand{\EIT}{\end{itemize}}
\newcommand{\BDI}{\begin{description}}
	\newcommand{\EDI}{\end{description}}
\newcommand{\BRM}{\begin{remark}}
	\newcommand{\ERM}{\end{remark}}
\newcommand{\BEL}{\begin{lem}}
	\newcommand{\EEL}{\end{lem}}
\title{Running supremum of Brownian motion in dimension 2: exact and asymptotic results}
\author{Krzysztof K\c{e}pczy\'nski}
\address{Krzysztof K\c{e}pczy\'nski, Mathematical Institute, University of Wroc\l aw, pl. Grunwaldzki 2/4, 50-384 Wroc\l aw, Poland}
\email{Krzysztof.Kepczynski@math.uni.wroc.pl}
\begin{document}
\bigskip

\date{\today}
 \maketitle

 {\bf Abstract:} This paper investigates $\pi_T(a_1,a_2) = \mathbb{P}\left(\sup\limits_{t\in[0,T]} (\sigma_1B(t)-c_1t)>a_1, \sup\limits_{t\in[0,T]}( \sigma_2 B(t)-c_2t)>a_2\right),$  where $\{B(t) : t \geq 0\}$ is a standard Brownian motion, with $T >0, \sigma_1,\sigma_2>0, c_1, c_2\in\mathbb{R}.$ We derive explicit formula for the probability $\pi_T\left(a_1,a_2\right)$ and find its asymptotic behavior both in the so called many-source and high-threshold regimes.\\
 {\bf Key Words:} Brownian motion; running supremum; exact distribution; exact asymptotics; many-source asymptotics; high-threshold asymptotics\\
 {\bf AMS Classification:} Primary 60G15; secondary 60G70\\

\newcommand{\comment}[1]{}
%\maketitle
\section{Introduction}
Consider $\{B(t)-ct:t\ge0\}$ a standard Brownian motion with drift $c\in \R$. The distribution of running supremum $\sup_{t\in[0,T]} (B(t)-ct)$, $T\in (0, \infty]$ plays the key role in many fields of applied probability, such as insurance and risk theory \cite{iglehart, konrad, michna2, Ji, foss, asmussen, dhm, djr}, queueing theory \cite{sm_buf_rev, kd2, mandjes, ref25}, financial mathematics \cite{Kunitomo, Li, merton, Wang, wilmott}. 
To be more precise, following the fundamental work of Iglehart \cite{iglehart},
\begin{eqnarray}
\pi_T(a):=\mathbb{P}\left( \sup_{t \in [0,T]}(\sigma B(t)-ct)>a \right)\label{1.dim}
\end{eqnarray}
describes the finite-time ruin probability in time horizon $[0,T]$, under the diffusion approximation regime, where $\sigma B(t)$ denotes the amount of cumulative claims up to time $t,$ $c>0$ is the premium rate and $a>0$ is the initial capital of the insurance company. In financial mathematics, $\max\left\{e^{\sup_{t\in[0,T]} (\sigma B(t)-ct)} - K, 0 \right\}$ describes the payoff of the lookback call option with maturity $T>0,$ where $e^{\sigma B(t) - ct}$ denotes the stock price at time $t$ and $K>0$ is the strike price, see e.g. \cite{wilmott}. In queueing theory,  $Q_T:=\sup_{t\in[0,T]} (\sigma B(t)-ct)$ has the interpretation of the buffer content at time $T>0$ of a fluid queue where $\sigma B(t)$ is the accumulated input in time interval $[0, t)$ and $c>0$ is the service rate, under the assumption that at time $t=0$ the system started off empty, see e.g. \cite{kd2}.

Notably, it is known that
\BQN\label{1dim_explicit}
\pi_T(a) & =
\Psi\left(\frac{a+cT}{\sqrt{T}\sigma} \right) + e^{-2ac/\sigma^2} \Psi\left(\frac{a-cT}{\sqrt{T}\sigma} \right),
\EQN
with $\Psi(x) = 1- \Phi(x) = \mathbb{P}\left(\mathcal{N}> x\right),$ where $\mathcal{N}$ is a standard normal random variable; see, e.g., \cite{kd2}.

In this contribution we consider two-dimensional extension of (\ref{1.dim}), that is, we are interested in
the joint distribution of
\begin{eqnarray}
\left(\sup_{t\in[0,T]} (\sigma_1B(t)-c_1t), \sup_{t\in[0,T]}( \sigma_2 B(t)-c_2t)\right),\label{2.dim}
\end{eqnarray}
with $\sigma_1,\sigma_2>0, c_1, c_2\in\mathbb{R}.$

The motivation to study (\ref{2.dim}) stems from its broad applicability in recently investigated problems, e.g.:\\
$\diamond$ \textit{Proportional reinsurance model.} 
Suppose that two companies, to be interpreted as the insurance company and the reinsurance company, share the payout of each claim in proportions $\sigma_1, \sigma_2>0,$ where $\sigma_1+\sigma_2=1,$ and receive premiums at rates $c_1,c_2>0,$ respectively. Let $R_i$ denote the risk process of $i$-th company
\BQNY
R_{i}(t) := a_i+c_it - \sigma_i B(t),
\EQNY
where $B(t)$ describes the accumulated claims up to time $t,$ $c_i>0$ is the premium rate and $a_i>0$ is the initial capital, $i=1,2.$ Then the joint survival function of (\ref{2.dim}) describes the \textit{component-wise ruin} probability in the proportional reinsurance, i.e.,
$$\mathbb{P}\left( \inf\limits_{t \in [0,T] } R_{1}(t) < 0, \inf\limits_{t \in [0,T] } R_{2}(t) < 0 \right) = \mathbb{P}\left( \sup_{t\in[0,T]} (\sigma_1B(t)-c_1t) > a_1, \sup_{t\in[0,T]}( \sigma_2 B(t)-c_2t) > a_2\right).$$
We refer for example to \cite{palmowski1, foss}, where related problems were investigated if the accumulated claim amount process is a L\'evy process and $T=\infty.$ For instance, Avram et al. \cite{palmowski1} found exact formulas for ruin probabilities in infinite-time horizon for spectrally negative or a compound Poisson process with exponential claims, and the asymptotic behavior of ruin probabilities under the Cram\'er condition. The case of subexponential claims was recently investigated in Foss et al. \cite{foss}.
\\
$\diamond$ \textit{Double barrier option.} Consider a binary option  where payoff depends on crossing two moving-barriers. Following, e.g., \cite{wilmott}, for $\nu_i>0, \mu_i>\mu, i=1,2,$ 
\BQNY
&\mathbb{E}\left(\one\{\exists t \in [0,T] : S(t) > S(0) e^{\nu_1 + \mu_1t}, \exists t \in [0,T] : S(t) > S(0) e^{\nu_2 + \mu_2t}\}\right)\\
&= \mathbb{P}\left(\sup\limits_{t \in [0,T]} \left(B(t) - \frac{\mu-\mu_1}{\sigma} t\right)> \frac{\nu_1}{\sigma}, \sup\limits_{t \in [0,T]} \left(B(t) - \frac{\mu-\mu_2}{\sigma} t\right) > \frac{\nu_2}{\sigma}\right)
\EQNY
models the price of the double-barrier binary option with double-exponential boundaries, where  $S(t) := S(0)e^{\sigma B(t) + \mu t}$ denotes the stock price at time $t,$ $\sigma, S(0)>0,$ $\mu \in \mathbb{R}.$ 
We refer to \cite{Kunitomo, Li, Wang} for related works on double barrier options.\\
$\diamond$ \textit{Parallel and tandem queues.}
Consider a two-node queuing network with constant service rates $c_1$ and $c_2,$ at first and second node, respectively. Traffic that enters the system is served in parallel by both queues. Following \cite{mandjes}, suppose that $B(t) - B(s)$ describes the accumulated input to the queue in time interval $[s,t).$ Let $Q_{i}(t)$ denote the buffer content of queue $i=1,2$ at time $t.$ If at time $t=0$ both queues are empty, then $$\left(Q_{1}(T), Q_2(T)\right) =_{d} \left( \sup_{t \in [0,T]} \left(B(t)-c_1t\right), \sup_{t \in [0,T]} \left(B(t)-c_2t\right) \right),$$ where $=_{d}$ denotes equality in distribution; see, e.g., \cite{kd2}. Then, the joint survival function of (\ref{2.dim}) describes the tail distribution function of the workload in a two-node parallel queue. Lieshout \& Mandjes \cite{mandjes} analyzed this model under stationarity assumption, i.e. the special case $T = \infty,$ and found exact distribution of (\ref{2.dim}), high-threshold asymptotics, and the most probable path leading to overflow. The tail distribution of (\ref{2.dim}) appears also in the context of Brownian-driven tandem queues; see, e.g., \cite{kd2, mandjes}.

We refer also to \cite{boxma, debicki.laplace, Ji2, Korshunov, michna2} for related works where multivariate analogs of (\ref{2.dim}) were analyzed. In particular, Boxma \& Kela \cite{boxma} and  D\c ebicki et al. \cite{debicki.laplace} studied suprema of the multivariate L\'evy processes in the language of Laplace transforms for $T = \infty,$ see also Michna \cite{michna2} where supremum of L\'evy processes with a broken drift was considered.

The main results of this contribution are given in Theorems \ref{tw1_1dim} and \ref{tw1}, where we present explicit formula for the joint tail distribution of (\ref{2.dim}). These results extend findings of Lieshout \& Mandjes \cite{mandjes}, see also Avram et al. \cite{palmowski1}, where the special case $T=\infty$ was derived. It occurs that depending on the model parameters we are faced with two scenarios: $a)$ dimension-reduction case when one component dominates the other and $b)$ the case when both components affect the joint distribution of (\ref{2.dim}). In the second case, the result is given in the language of a combination of survival functions of a bivariate normal random variable, which does not provide a straightforward interpretation of the behavior of Brownian motion that leads to high values of both components in (\ref{2.dim}). We analyze it in Sections \ref{high.threshold.scenario}, \ref{many.source.scenario} focusing on the so called \textit{high-threshold} regime (Theorem \ref{large.buffer}) and \textit{many-source} regime (Theorems \ref{tw21} and \ref{tw2}).

Let us briefly mention some notation used in this contribution. For two given positive functions $f(\cdot)$ and $g(\cdot)$ we write $f(x) =g(x)(1 +o(1))$ if $\lim\limits_{x \to \infty}f(x)/g(x) = 1$ and $f(x) =o(g(x))$ if $\lim\limits_{x \to \infty} f(x)/g(x) = 0.$\\
Moreover, for $\left(X,Y\right)\sim \mathcal{N}\left(\mu,\Sigma\right)$ with $\mu = \left(0,0\right)$ and $\Sigma=\left(\begin{array}{cc} 1 &  \rho \\ \rho  & 1 \end{array} \right),$ we write $$\Phi_2(\rho; s, t) = \mathbb{P}\left(X\leq s, Y \leq t\right)\text{ and } \Psi_2(\rho; s, t) = \mathbb{P}\left(X > s, Y > t\right).$$
By $\phi(\sigma^2;x)$ we denote the density function of the centered normal random variable with variance $\sigma^2.$ Recall that 
\begin{eqnarray}\label{l1}
\Psi\left(x\right) = \frac{1}{\sqrt{2 \pi} x} e^{-\frac{x^2}{2}}\left(1+o(1)\right), \text{ as } x \to \infty.
\end{eqnarray}

The remainder of the paper is organized as follows. In Section \ref{main_results} we formalize the considered model and present main results of this contribution. We derive an exact expression of the joint survival function of (\ref{2.dim}) and its asymptotic behavior in high-threshold and many-source regimes. Section \ref{proofs} contains auxiliary facts and proofs.
\section{Main results}\label{main_results}
Let $\{B(t): t \geq 0\}$ be a standard Brownian motion. We study the joint survival function 
$$\pi_T\left(a_1,a_2\right) := \mathbb{P}\left(\sup_{t\in[0,T]} (\sigma_1B(t)-c_1t)>a_1, \sup_{t\in[0,T]}( \sigma_2 B(t)-c_2t)>a_2\right).$$
Notice that, without loss of the generality, we shall suppose that $\sigma_i = 1,$ for $i = 1, 2.$ Next, due to the symmetry of the considered problem, in the rest of the paper we tacitly assume that $$c_1>c_2.$$
We note that if $a_1 \geq a_2 > 0,$ then the lines $a_1+c_1t$ and $a_2+c_2t$ do not intersect over $[0, \infty)$ and the problem degenerates to (\ref{1dim_explicit}). Therefore, we assume that $$0<a_1 < a_2.$$ Let $$t^* := \frac{a_2-a_1}{c_1-c_2}$$ denote the unique point of intersection of the above lines.
\subsection{Exact distribution}\label{exact}
We split the analysis of $\pi_{T}(a_1,a_2)$ into two scenarios: $t^{*} \geq T,$ when one component dominates the other, leading to the one-dimensional phenomena, and $t^{*}<T,$ when both components affect the probability $\pi_T(a_1,a_2).$ 
\begin{theorem}\label{tw1_1dim}
Suppose that $t^*\geq T.$ Then
$$\pi_{T}(a_1,a_2) = \Psi\left(\frac{a_2+c_2T}{\sqrt{T}}\right) + e^{-2a_2c_2}\Psi\left(\frac{a_2-c_2T}{\sqrt{T}}\right).$$
\end{theorem}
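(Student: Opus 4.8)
The plan is to reduce the bivariate first-passage problem to the univariate one \eqref{1dim_explicit} by a simple geometric observation about the two linear boundaries. Recall that we have normalized $\sigma_1=\sigma_2=1$. First I would rewrite the two suprema events as boundary-crossing events: for $i=1,2$,
$$\left\{\sup_{t\in[0,T]}(B(t)-c_it)>a_i\right\}=\left\{\exists\,t\in[0,T]:\ B(t)>a_i+c_it\right\}.$$

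Next I would compare the two boundary lines on $[0,T]$. Since $c_1>c_2$, the inequality $a_1+c_1t\le a_2+c_2t$ is equivalent to $(c_1-c_2)t\le a_2-a_1$, i.e. to $t\le t^*=\tfrac{a_2-a_1}{c_1-c_2}$. Because $t^*\ge T$ by assumption, it follows that $a_1+c_1t\le a_2+c_2t$ for every $t\in[0,T]$. Consequently, if $B(t)>a_2+c_2t$ for some $t\in[0,T]$, then automatically $B(t)>a_1+c_1t$ at that same $t$. Hence
$$\left\{\sup_{t\in[0,T]}(B(t)-c_2t)>a_2\right\}\subseteq\left\{\sup_{t\in[0,T]}(B(t)-c_1t)>a_1\right\},$$
so the intersection of the two events coincides with $\left\{\sup_{t\in[0,T]}(B(t)-c_2t)>a_2\right\}$, and therefore $\pi_T(a_1,a_2)=\mathbb{P}\!\left(\sup_{t\in[0,T]}(B(t)-c_2t)>a_2\right)$.

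It then remains to apply the classical one-dimensional identity \eqref{1dim_explicit} with $\sigma=1$, $c=c_2$, $a=a_2$, which yields exactly the asserted expression. The only point I would flag explicitly is that \eqref{1dim_explicit} is valid for arbitrary $c\in\R$ (not merely $c>0$), which is standard and follows, e.g., from the reflection principle and the joint law of $\bigl(B(T),\sup_{t\in[0,T]}B(t)\bigr)$. There is essentially no analytic obstacle in this theorem; the only mild care needed is in the direction of the inequality when dividing by $c_1-c_2>0$ and in the trivial boundary case $t=t^*=T$, both of which are immediate.
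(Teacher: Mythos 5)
Your proposal is correct and follows exactly the same route as the paper's proof: since $t^*\geq T$ forces $a_1+c_1t\leq a_2+c_2t$ on $[0,T]$, the second crossing event is contained in the first, so $\pi_T(a_1,a_2)$ reduces to the one-dimensional probability, to which \eqref{1dim_explicit} applies. Nothing is missing; the paper states the same argument more tersely.
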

It immediately follows from the proof of Theorem \ref{tw1_1dim}, that for $t^{*} \geq T$ we have $$\pi_{T}(a_1,a_2) = \mathbb{P}\left( \sup\limits_{t \in [0,T]}\left(B(t)-c_2t\right) > a_2\right).$$
\begin{theorem} \label{tw1}
Suppose that $t^*<T.$ Then
\begin{eqnarray*}
    \pi_{T}(a_1,a_2) &=& \Psi \left(\frac{a_1+c_1T}{\sqrt{T}}\right) - \Psi_2 \left(-\sqrt{\frac{t^*}{T}}; \frac{a_1+c_1t^*}{\sqrt{t^*}}, -\frac{a_2+c_2T}{\sqrt{T}}\right) \\
        && + e^{-2a_1c_1} \left(\Psi\left(\frac{a_1-c_1T}{\sqrt{T}}\right) - \Psi_2\left(\sqrt{\frac{t^*}{T}}; \frac{a_1-c_1t^*}{\sqrt{t^*}}, \frac{(2a_1-a_2)-c_2T}{\sqrt{T}}\right) \right)\\
        && +  e^{-2a_2c_2}\Psi_2 \left(\sqrt{\frac{t^*}{T}}; \frac{a_2-c_2t^*}{\sqrt{t^*}}, \frac{a_2-c_2T}{\sqrt{T}}\right) \\
        &&+ e^{-2(a_1(c_1-2c_2)+a_2c_2)} \Psi_2\left(-\sqrt{\frac{t^*}{T}}; \frac{(2a_1-a_2)+c_2t^*}{\sqrt{t^*}}, \frac{(a_2-2a_1)-c_2T}{\sqrt{T}}\right).
\end{eqnarray*}
\end{theorem}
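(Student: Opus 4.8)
Throughout write $\tau_i := \inf\{t \ge 0 : B(t) - c_i t = a_i\}$ for the first time $B$ crosses the line $\ell_i(t) := a_i + c_i t$, so that $\pi_T(a_1,a_2) = \mathbb{P}(\tau_1 \le T,\ \tau_2 \le T)$. Since $B(0) = 0 < a_1 = \ell_1(0) < a_2 = \ell_2(0)$ and $c_1 > c_2$, we have $\ell_1 < \ell_2$ on $[0,t^*)$ and $\ell_1 > \ell_2$ on $(t^*,\infty)$. The plan begins with the pathwise identity $\{\tau_1 \le T,\ \tau_2 > T\} = \{\tau_1 \le t^*,\ \tau_2 > T\}$: if $B$ first met $\ell_1$ at a time in $(t^*,T]$, then at that instant $B$ would lie strictly above the (by then lower) line $\ell_2$, and since $B$ started below $\ell_2$ the intermediate value theorem would force $B$ to have met $\ell_2$ even earlier, contradicting $\tau_2 > T$. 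Splitting $\{\tau_1 \le T\}$ and $\{\tau_1 \le t^*\}$ according to whether $\tau_2 \le T$ and using this identity yields
\[
\pi_T(a_1,a_2) \;=\; \mathbb{P}(\tau_1 \le T) \;-\; \mathbb{P}(\tau_1 \le t^*) \;+\; \mathbb{P}(\tau_1 \le t^*,\ \tau_2 \le T),
\]
where the first two terms are given by (\ref{1dim_explicit}) with horizons $T$ and $t^*$.

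It remains to compute $P := \mathbb{P}(\tau_1 \le t^*,\ \tau_2 \le T)$. Conditioning on $\mathcal F_{\tau_1}$ and using the strong Markov property: on $\{\tau_1 = u \le t^*\}$ the path sits at height $a_1 + c_1 u$, and crossing $\ell_2$ before time $T$ is exactly the event that a Brownian motion with drift $-c_2$, run for time $T-u$, reaches the vertical gap $\ell_2(u) - \ell_1(u) = (c_1-c_2)(t^*-u)$; inserting (\ref{1dim_explicit}) for this conditional probability and integrating against the (possibly defective) inverse Gaussian density $f_{\tau_1}$ of $\tau_1$ writes $P = I_1 + I_2$. Using $B(\tau_1) = a_1 + c_1\tau_1$ and $(c_1-c_2)t^* = a_2-a_1$ the increments telescope and the first piece collapses to $I_1 = \mathbb{P}(\tau_1 \le t^*,\ B(T)-c_2 T > a_2)$. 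For the second piece, a direct manipulation of the Gaussian exponents — again using $(c_1-c_2)t^* = a_2-a_1$ — gives $f_{\tau_1}(u)\,e^{-2(c_1-c_2)(t^*-u)c_2} = e^{-2a_2c_2}\,\widetilde f(u)$, where $\widetilde f$ is the first--passage density to $a_1$ of the auxiliary drifted motion $\widetilde B(t) := B(t)-(c_1-2c_2)t$; the same telescoping then gives $I_2 = e^{-2a_2c_2}\,\mathbb{P}(\widetilde{\tau}_1 \le t^*,\ \widetilde B(T) > a_2-(c_1-c_2)T)$ with $\widetilde{\tau}_1$ the first passage of $\widetilde B$ to $a_1$.

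Both $I_1$ and $e^{2a_2c_2}I_2$ have the form $Q(\gamma) := \mathbb{P}\big(\sigma \le t^*,\ Y(T) > a_2-(c_1-c_2)T\big)$ with $Y(t) := B(t)-\gamma t$ and $\sigma := \inf\{t\ge 0 : Y(t) = a_1\}$, taking $\gamma = c_1$ for $I_1$ and $\gamma = c_1-2c_2$ for $e^{2a_2c_2}I_2$, so I would prove one lemma for $Q(\gamma)$ and apply it twice. The lemma follows by splitting on $\{Y(t^*) \ge a_1\}$ versus $\{Y(t^*) < a_1\}$: on the former $\{\sigma \le t^*\}$ is automatic, and the contribution is the rectangle probability $\mathbb{P}(Y(t^*) \ge a_1,\ Y(T) > a_2-(c_1-c_2)T)$ for the bivariate normal pair $(Y(t^*),Y(T))$, whose correlation is $\sqrt{t^*/T}$, i.e.\ a single $\Psi_2$--term; on the latter one reflects $Y$ about the level $a_1$ at time $\sigma$, made rigorous by a Girsanov change of measure removing the drift, the classical reflection principle for the resulting Brownian motion, and a second Girsanov restoring the drift, which produces the factor $e^{-2a_1\gamma}$ together with another rectangle probability, i.e.\ a term $e^{-2a_1\gamma}\big(\Psi(\cdot)-\Psi_2(\sqrt{t^*/T};\,\cdot,\cdot)\big)$. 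Carrying this out for $\gamma=c_1$ and $\gamma=c_1-2c_2$ produces four $\Psi_2$--contributions with prefactors $1$, $e^{-2a_1c_1}$, $e^{-2a_2c_2}$, $e^{-2(a_1(c_1-2c_2)+a_2c_2)}$; substituting into the displayed reduction and simplifying via $\Psi(s) - \Psi_2(\rho;s,t) = \Psi_2(-\rho;s,-t)$ — which also merges or cancels the leftover single--variable $\Psi(\cdot)$ terms coming from $\mathbb{P}(\tau_1 \le t^*)$ and from the lemma — produces exactly the stated four--term formula.

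The main obstacle is this reflection step and its bookkeeping: reflecting a Brownian motion with drift about a linear boundary while correctly tracking the Radon--Nikodym weights (which are the source of the four exponential prefactors), and then verifying that each of the four resulting probabilities genuinely reduces to a rectangle event for the single bivariate normal pair $(B(t^*),B(T))$, with the correct orientation of the half--planes, so that every correlation appears as $\pm\sqrt{t^*/T}$ and every argument of $\Psi_2$ matches the one in the statement — together with the exponent computation that recasts the weighted inverse Gaussian density as an unweighted first--passage density for the shifted drift $c_1-2c_2$.
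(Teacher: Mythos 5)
Your argument is correct, and it reaches the stated identity by a genuinely different route from the paper's. The paper first applies inclusion--exclusion to reduce $\pi_T(a_1,a_2)$ to the non-crossing probability $\mathcal{P}(T)=\mathbb{P}\left(\forall t\in[0,T]:B(t)\le(a_1+c_1t)\wedge(a_2+c_2t)\right)$, then conditions on $B(t^*)$ and $B(T)-B(t^*)$ and invokes the explicit probability $1-e^{-2b(b-y)/L}$ that a Brownian bridge stays below a line; expanding the product of the two bridge factors yields four Gaussian double integrals, each evaluated by completing the square. You instead work with the first-passage times directly: your pathwise identity $\{\tau_1\le T,\ \tau_2>T\}=\{\tau_1\le t^*,\ \tau_2>T\}$ and the resulting reduction $\pi_T(a_1,a_2)=\mathbb{P}(\tau_1\le T)-\mathbb{P}(\tau_1\le t^*)+\mathbb{P}(\tau_1\le t^*,\ \tau_2\le T)$ are valid; the strong Markov step and both telescoping computations check out (in particular the exponent identity $f_{\tau_1}(u)\,e^{-2c_2(c_1-c_2)(t^*-u)}=e^{-2a_2c_2}\widetilde f(u)$ does reduce, as you say, to $(c_1-c_2)t^*=a_2-a_1$); and your lemma for $Q(\gamma)$, applied with $\gamma=c_1$ and $\gamma=c_1-2c_2$, produces exactly the four $\Psi_2$-terms with prefactors $1$, $e^{-2a_1c_1}$, $e^{-2a_2c_2}$, $e^{-2(a_1(c_1-2c_2)+a_2c_2)}$, after which the leftover $\Psi$-terms from $\mathbb{P}(\tau_1\le T)-\mathbb{P}(\tau_1\le t^*)$ merge via $\Psi(s)-\Psi_2(\rho;s,t)=\Psi_2(-\rho;s,-t)$ precisely as claimed. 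What the paper's route buys is that everything becomes mechanical Gaussian integration, with no appeal to the strong Markov property or a change of measure; what yours buys is a probabilistic reading of each term --- the exponential prefactors arise as reflection/likelihood-ratio weights and each $\Psi_2$ is an honest rectangle event for the pair $(B(t^*),B(T))$ --- at the cost of carefully justifying the reflection of a drifted Brownian motion about a level and the bookkeeping you yourself flag, none of which, however, presents a genuine obstruction.
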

Theorem \ref{tw1} generalizes Theorem $3.1$ in Lieshout \& Mandjes \cite{mandjes},
where the special case $T = \infty$ was considered. Indeed, putting $T=\infty$ in Theorem \ref{tw1}, we recover that, for $c_1> c_2>0,$
\BQNY
       \pi_\infty(a_1,a_2) &:=&\mathbb{P} \left( \sup_{t \geq 0} \left(B(t) - c_1t\right) > a_1, \text{ } \sup_{t \geq 0} \left(B(t) - c_2t\right) > a_2\right)\\
        & =& e^{-2(a_1(c_1-2c_2)+a_2c_2)}\left(1- 
      \Psi\left(\frac{(c_1-2c_2)t^*-a_1}{\sqrt{t^*}}\right)\right) + e^{-2a_1c_1} \Psi\left(\frac{c_1t^*-a_1}{\sqrt{t^*}}\right) \\
      &&+ e^{-2a_2c_2}\Psi\left(\frac{(c_1-2c_2)t^*+a_1}{\sqrt{t^*}}\right)-\Psi\left(\frac{c_1t^*+a_1}{\sqrt{t^*}}\right),
\EQNY
which agrees with Theorem $3.1$ in \cite{mandjes}.
\subsection{Exact asymptotics}
In this section we study the asymptotic behavior of $\pi_T\left(a_1, a_2\right)$ in two cases: \textit{high-threshold} and \textit{many source} regimes. Having in mind motivations stemming from both fluid queueing theory and risk theory, we focus on the case of positive drifts, i.e. tacitly suppose that $c_1>c_2>0.$ 
\subsubsection{High-threshold scenario}\label{high.threshold.scenario}
We analyze asymptotic behavior of $$\pi_T(ab, b ) = \mathbb{P}\left( \sup_{t \in [0,T]} \left( B(t)-c_1t\right) > ab, \sup_{t \in [0,T]} \left( B(t)-c_2t\right) > b \right),$$
as $b \to \infty,$ where $a \in (0,1).$ We refer to \cite{konrad, palmowski1, foss, Ji, dhm, djr} for the motivation to study the asymptotics of $\pi_T(ab,b)$ and the results for related models considered in the risk theory.
\begin{theorem}\label{large.buffer}
Suppose that $c_1> c_2>0$ and  $a \in (0,1).$ Then, as $b \to \infty,$
$$\pi_T(ab,b) =\sqrt{\frac{2 T}{\pi}}\frac{1}{b} e^{-\frac{(b+c_2T)^2}{2T}}\left(1+o(1)\right).$$
\end{theorem}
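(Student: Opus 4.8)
The plan is to apply the exact formula of Theorem \ref{tw1} with $a_1 = ab$ and $a_2 = b$ (so that the intersection point $t^* = (b-ab)/(c_1-c_2) = b(1-a)/(c_1-c_2)$ tends to $\infty$, hence for $b$ large enough we are in the regime $t^* < T$... wait, that is false). Let me reconsider: since $t^* \to \infty$ as $b\to\infty$, eventually $t^* \geq T$, so for all large $b$ we are actually in the regime of Theorem \ref{tw1_1dim}, which gives $\pi_T(ab,b) = \Psi\left(\frac{b+c_2T}{\sqrt T}\right) + e^{-2bc_2}\Psi\left(\frac{b-c_2T}{\sqrt T}\right)$ \emph{exactly}. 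So the proof reduces to an asymptotic analysis of this one-dimensional expression as $b\to\infty$.

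First I would observe that $t^* = b(1-a)/(c_1-c_2) \geq T$ for all $b$ large enough (precisely $b \geq T(c_1-c_2)/(1-a)$), so Theorem \ref{tw1_1dim} applies and $\pi_T(ab,b) = \Psi\left(\frac{b+c_2T}{\sqrt T}\right) + e^{-2bc_2}\Psi\left(\frac{b-c_2T}{\sqrt T}\right)$. Next I would apply the standard Mills-ratio asymptotic \eqref{l1} to each term. For the first term, $\Psi\left(\frac{b+c_2T}{\sqrt T}\right) = \frac{\sqrt T}{\sqrt{2\pi}(b+c_2T)} e^{-\frac{(b+c_2T)^2}{2T}}(1+o(1)) = \sqrt{\frac{T}{2\pi}}\frac{1}{b} e^{-\frac{(b+c_2T)^2}{2T}}(1+o(1))$. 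For the second term, $e^{-2bc_2}\Psi\left(\frac{b-c_2T}{\sqrt T}\right) = e^{-2bc_2}\cdot \sqrt{\frac{T}{2\pi}}\frac{1}{b} e^{-\frac{(b-c_2T)^2}{2T}}(1+o(1))$, and since $-2bc_2 - \frac{(b-c_2T)^2}{2T} = -\frac{(b+c_2T)^2}{2T}$ (expand both sides: the cross terms combine as $-\frac{b^2}{2T} - \frac{c_2^2 T}{2} + c_2 b - 2bc_2 = -\frac{b^2}{2T} - \frac{c_2^2 T}{2} - c_2 b$), the second term equals $\sqrt{\frac{T}{2\pi}}\frac{1}{b} e^{-\frac{(b+c_2T)^2}{2T}}(1+o(1))$ as well.

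Adding the two contributions gives $\pi_T(ab,b) = 2\sqrt{\frac{T}{2\pi}}\frac{1}{b} e^{-\frac{(b+c_2T)^2}{2T}}(1+o(1)) = \sqrt{\frac{2T}{\pi}}\frac{1}{b} e^{-\frac{(b+c_2T)^2}{2T}}(1+o(1))$, which is exactly the claimed expression. The only genuine subtlety — and it is mild — is recognizing at the outset that the high-threshold scaling $a_1 = ab$, $a_2 = b$ with fixed $a\in(0,1)$ pushes $t^*$ past $T$, so the bivariate formula of Theorem \ref{tw1} is never actually invoked; everything collapses to the one-dimensional case. The rest is a routine application of \eqref{l1} together with the algebraic identity $-2bc_2 - \frac{(b-c_2T)^2}{2T} = -\frac{(b+c_2T)^2}{2T}$, which shows that both summands are asymptotically of the same order and contribute equally, doubling the leading constant.
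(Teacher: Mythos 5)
Your proof is correct and follows essentially the same route as the paper: observe that $t^* = b(1-a)/(c_1-c_2)\to\infty$ so that for large $b$ Theorem \ref{tw1_1dim} reduces everything to the one-dimensional probability $\mathbb{P}(\sup_{t\in[0,T]}(B(t)-c_2t)>b)$, then apply \eqref{1dim_explicit} and \eqref{l1}. Your explicit verification of the identity $-2bc_2-\frac{(b-c_2T)^2}{2T}=-\frac{(b+c_2T)^2}{2T}$, which shows the two summands contribute equally, is exactly the computation the paper leaves implicit.
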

It follows from Theorem $2.3$ that irrespective of the value of $T,$ in finite-time horizon  one coordinate asymptotically dominates the other. Indeed, as $b \to \infty,$ we obtain that
$$\pi_{T}(ab,b) = \mathbb{P}\left( \sup\limits_{t \in [0,T]}\left(B(t)-c_2t\right) > b\right)\left(1+o(1)\right).$$
\subsubsection{Many-source scenario}\label{many.source.scenario}
We analyze asymptotic behavior of  $$\psi_T(N) := \mathbb{P}\left( \sup_{t \in [0,T]} \left(\sum_{k=1}^{N} B_k(t)-c_1Nt\right) > a_1N, \sup_{t \in [0,T]} \left(\sum_{k=1}^{N} B_k(t)-c_2Nt \right)> a_2N \right),$$ as $N \to \infty,$ where $\{B_k(t): t\geq0\},$ $k=1,\dots,N$ are mutually independent standard Brownian motions.

We refer to \cite{sm_buf_rev, kd2, mandjes} for the motivation to consider such an asymptotics in the context of fluid queues in the so called many-source regime.

It appears from general theory of extremes of Gaussian processes that in the one-dimensional case the process $\left\{\sum_{k=1}^{N} B_k(t)-c_iNt : t \geq 0\right\}$ has the maximal chance to cross level $a_iN$ around the point that maximizes the following function  $\sigma_i^2(t) := \mathbb{V}ar\left(\frac{\sum_{k=1}^{N} B_k(t)}{a_i+c_it}\right),\text{ } t \geq 0.$ Elementary  calculations show that $$t_i := \arg\sup\limits_{t \geq 0}\sigma_i^2(t) =  \frac{a_i}{c_i} , \text{ for }i=1,2.$$
It appears that points $t_1$ and $t_2$ play important role  also in two-dimensional case.
As we show later, the order between $t_1, t_2$ and $t^{*}$ affects the asymptotics of $\pi(N,T),$ as $N \to \infty.$ It turns out that, similarly to Theorem \ref{tw1_1dim}, asymptotics of $\pi(N,T)$ also can lead to dimension-reduction. The full-dimensional case occurs only if $t_1 \leq t^{*} \leq t_2$ and $t^{*}<T.$ Then, the asymptotics depends on $$\tilde{t} := \frac{a_2-2a_1}{c_2}.$$

Similarly to Section \ref{exact} we distinguish two scenarios: $t^{*} \geq T$ and $t^{*}<T.$
\begin{theorem}\label{tw21} 
Suppose that $c_1>c_2>0$ and $t^* \geq T.$
    \begin{enumerate}
    \item[$(i)$] If $t_2> T,$ then, as $N \to \infty,$ 
    $$\psi_T(N) = \frac{1}{\sqrt{2\pi}}\left(\frac{\sqrt{T}}{a_2+c_2 T} +\frac{\sqrt{T}}{a_2-c_2 T} \right)\frac{1}{\sqrt{N}}e^{-\frac{(a_2+c_2T)^2}{2T}N}\left(1+o(1)\right).$$
    \item[$(ii)$] If $t_2= T,$ then, as $N \to \infty,$ 
    $$\psi_T(N) = \frac{1}{2}e^{-2a_2c_2N}\left(1+o(1)\right).$$
    \item[$(iii)$] If $t_2< T,$ then, as $N \to \infty,$ 
    $$\psi_T(N) = e^{-2a_2c_2N}\left(1+o(1)\right).$$
    \end{enumerate}
\end{theorem}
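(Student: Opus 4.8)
The plan is to collapse $\psi_T(N)$ onto the one–dimensional exact formula of Theorem~\ref{tw1_1dim} by Brownian self–similarity, and then read off the three asymptotic regimes from \eqref{l1}.

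First I would observe that, since $B_1,\dots,B_N$ are independent standard Brownian motions, $\bigl\{\sum_{k=1}^{N}B_k(t):t\ge0\bigr\}\overset{d}{=}\{\sqrt N\,B(t):t\ge0\}$ as processes. Dividing both events defining $\psi_T(N)$ by $\sqrt N$ therefore gives
$$\psi_T(N)=\mathbb{P}\!\left(\sup_{t\in[0,T]}\bigl(B(t)-c_1\sqrt N\,t\bigr)>a_1\sqrt N,\ \sup_{t\in[0,T]}\bigl(B(t)-c_2\sqrt N\,t\bigr)>a_2\sqrt N\right),$$
which is exactly the quantity appearing in Theorem~\ref{tw1_1dim} with thresholds $a_i\sqrt N$ and drifts $c_i\sqrt N$. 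The intersection time of the two lines is now $\frac{a_2\sqrt N-a_1\sqrt N}{c_1\sqrt N-c_2\sqrt N}=t^{*}$, unchanged by the scaling, so the hypothesis $t^{*}\ge T$ is preserved (as are $0<a_1\sqrt N<a_2\sqrt N$ and $c_1\sqrt N>c_2\sqrt N>0$), and Theorem~\ref{tw1_1dim} yields the closed form
$$\psi_T(N)=\Psi\!\left(\sqrt N\,\frac{a_2+c_2T}{\sqrt T}\right)+e^{-2a_2c_2N}\,\Psi\!\left(\sqrt N\,\frac{a_2-c_2T}{\sqrt T}\right).$$

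Next I would split according to the sign of $a_2-c_2T$, which is precisely the trichotomy $t_2\gtrless T$ since $t_2=a_2/c_2$, the only algebraic input being the identity $2a_2c_2+\frac{(a_2-c_2T)^2}{2T}=\frac{(a_2+c_2T)^2}{2T}$. In case $(i)$, $t_2>T$, both arguments of $\Psi$ tend to $+\infty$; applying \eqref{l1} to each summand and noting via the identity that both carry the same exponential rate $e^{-\frac{(a_2+c_2T)^2}{2T}N}$, one simply adds the two polynomial prefactors to obtain the stated expression. In case $(ii)$, $t_2=T$, the second $\Psi$ equals $\Psi(0)=\tfrac12$, whereas the first summand is $O(N^{-1/2})\,e^{-\frac{(a_2+c_2T)^2}{2T}N}$ and, because $\frac{(a_2+c_2T)^2}{2T}=2a_2c_2$ when $a_2=c_2T$, is negligible next to $\tfrac12 e^{-2a_2c_2N}$. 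In case $(iii)$, $t_2<T$, the second $\Psi$ tends to $\Psi(-\infty)=1$, while the first summand decays strictly faster than $e^{-2a_2c_2N}$ since $\frac{(a_2+c_2T)^2}{2T}-2a_2c_2=\frac{(a_2-c_2T)^2}{2T}>0$; hence $\psi_T(N)=e^{-2a_2c_2N}(1+o(1))$.

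I do not expect a genuine obstacle: the two points requiring care are checking that the self–similar rescaling leaves $t^{*}$ and the parameter orderings untouched so that Theorem~\ref{tw1_1dim} applies verbatim, and keeping track of the competing exponential rates in the three regimes — both entirely controlled by the single identity $2a_2c_2+\frac{(a_2-c_2T)^2}{2T}=\frac{(a_2+c_2T)^2}{2T}$.
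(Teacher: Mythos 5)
Your proposal is correct and is exactly the argument the paper intends: its one-line proof ("a straightforward combination of Theorem \ref{tw1_1dim}, (\ref{1dim_explicit}) and (\ref{l1})") is precisely your reduction via $\sum_{k=1}^{N}B_k \stackrel{d}{=} \sqrt{N}B$ to the closed form of Theorem \ref{tw1_1dim} with parameters $a_i\sqrt{N}$, $c_i\sqrt{N}$, followed by the trichotomy on the sign of $a_2-c_2T$ and the rate identity $2a_2c_2+\frac{(a_2-c_2T)^2}{2T}=\frac{(a_2+c_2T)^2}{2T}$. Your explicit check that the rescaling preserves $t^*$ and the parameter orderings is a detail the paper leaves implicit.
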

Scenario $t^{*}<T$ leads to several subcases which are analyzed in detail in the following theorem.
\begin{theorem}\label{tw2} 
Suppose that  $c_1 >c_2>0$ and $t^* < T.$
    \begin{enumerate}
        \item[$(i,a)$] If $t^*<t_1$ and $T<t_1,$ then, as $N \to \infty,$  
        $$\psi_T(N) = \frac{1}{\sqrt{2\pi}}\left(\frac{\sqrt{T}}{a_1+c_1 T}+\frac{\sqrt{T}}{a_1-c_1 T} \right) \frac{1}{\sqrt{N}}e^{-\frac{(a_1+c_1T)^2}{2T}N}\left(1+o(1)\right).$$
        \item[$(i,b)$] If $t^*<t_1$ and $T=t_1,$ then, as $N \to \infty,$  
        $$\psi_T(N) = \frac{1}{2}e^{-2a_1c_1N}\left(1+o(1)\right).$$
        \item[$(i,c)$] If $t^*<t_1$ and $T>t_1,$ then, as $N \to \infty,$  
        $$\psi_T(N) = e^{-2a_1c_1N}\left(1+o(1)\right).$$
        \item[$(ii)$] If $t_1=t^*,$ then, as $N \to \infty,$  
        $$\psi_T(N) = \frac{1}{2}e^{-2a_1c_1N}\left(1+o(1)\right).$$
        \item[$(iii,a)$] If $t_1<t^*<t_2$ and $T < \tilde{t},$ then, as $N \to \infty,$ 
        $$\psi_T(N) = \frac{1}{\sqrt{2\pi}}\left(\frac{\sqrt{T}}{(a_2-2a_1)-c_2T}-\frac{\sqrt{T}}{(2a_1-a_2)-c_2T}\right)\frac{1}{\sqrt{N}}e^{-\frac{\left(2a_1-a_2-c_2T\right)^2+4a_1c_1T}{2T}N}\left(1+o(1)\right).$$
        \item[$(iii,b)$] If $t_1<t^* <t_2$ and $T=\tilde{t},$ then, as $N \to \infty,$  
        $$\psi_T(N) = \frac{1}{2}e^{-2(a_1(c_1-2c_2)+a_2c_2)N}\left(1+o(1)\right).$$
        \item[$(iii,c)$] If $t_1<t^*<t_2,$ $T>\tilde{t}$ and $\tilde{t}<t^{*},$ then, as $N \to \infty,$
        $$\psi_T(N) = \frac{1}{\sqrt{2\pi}}\left(\frac{\sqrt{t^*}}{a_2-c_2t^*}+\frac{\sqrt{t^*}}{(2a_1-a_2)+c_2t^*} -\frac{\sqrt{t^*}}{a_1+c_1t^*}-\frac{\sqrt{t^*}}{a_1-c_1t^*}\right) \frac{1}{\sqrt{N}} e^{-\frac{\left(a_1+c_1t^*\right)^2}{2t^*}N}\left(1+o(1)\right).$$
        \item[$(iii,d)$] If $t_1<t^*<t_2,$ $T>\tilde{t}$ and $\tilde{t}=t^{*},$ then, as $N \to \infty,$  
        $$\psi_T(N) = \frac{1}{2}e^{-2(a_1(c_1-2c_2)+a_2c_2)N}\left(1+o(1)\right).$$
        \item[$(iii,e)$] If $t_1<t^*<t_2,$ $T>\tilde{t}$ and $\tilde{t}>t^*,$ then, as $N \to \infty,$ 
        $$\psi_T(N) = e^{-2(a_1(c_1-2c_2)+a_2c_2)N}\left(1+o(1)\right).$$
        \item[$(iv)$] If $t_1<t_2=t^*,$ then, as $N \to \infty,$  
        $$\psi_T(N) = \frac{1}{2}e^{-2a_2c_2N}\left(1+o(1)\right).$$
        \item[$(v)$] If $t_2<t^*,$ then, as $N \to \infty,$  
        $$\psi_T(N) = e^{-2a_2c_2N}\left(1+o(1)\right).$$
    \end{enumerate}
\end{theorem}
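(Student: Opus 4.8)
The plan is as follows. The first step is Brownian scaling: since $\sum_{k=1}^{N}B_k(t)\stackrel{d}{=}\sqrt{N}\,B(t)$, dividing both suprema by $\sqrt{N}$ gives the exact identity
\[
\psi_T(N)=\mathbb{P}\!\left(\sup_{t\in[0,T]}\bigl(B(t)-c_1\sqrt{N}\,t\bigr)>a_1\sqrt{N},\ \sup_{t\in[0,T]}\bigl(B(t)-c_2\sqrt{N}\,t\bigr)>a_2\sqrt{N}\right),
\]
so $\psi_T(N)$ is $\pi_T$ evaluated at thresholds $a_i\sqrt{N}$ and drifts $c_i\sqrt{N}$. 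The crossing time $t^{*}=(a_2-a_1)/(c_1-c_2)$ is invariant under this common rescaling, hence $t^{*}<T$ is preserved and Theorem~\ref{tw1} applies verbatim. Substituting, $\psi_T(N)$ becomes a sum of six explicit terms: two univariate ones, $\Psi\bigl(\sqrt{N}(a_1+c_1T)/\sqrt{T}\bigr)$ and $e^{-2a_1c_1N}\Psi\bigl(\sqrt{N}(a_1-c_1T)/\sqrt{T}\bigr)$, and four bivariate ones of the form $e^{-\gamma N}\Psi_2\bigl(\pm\sqrt{t^{*}/T};\sqrt{N}\,u,\sqrt{N}\,v\bigr)$ with $\gamma\in\{0,\ 2a_1c_1,\ 2a_2c_2,\ 2(a_1(c_1-2c_2)+a_2c_2)\}$ and $u,v$ the explicit linear forms in $a_i,c_i,t^{*},T$ read off from Theorem~\ref{tw1}. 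The whole proof then reduces to the large-$N$ analysis of these six terms in each of the eleven subcases.

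The second step is to assemble the asymptotic dictionary. For $\Psi$ this is $(\ref{l1})$: $\Psi(\sqrt{N}u)\sim(\sqrt{2\pi N}\,u)^{-1}e^{-Nu^2/2}$ for $u>0$, $\Psi(\sqrt{N}u)\to1$ for $u<0$, and $\Psi(0)=\tfrac12$. For $\Psi_2(\rho;\sqrt{N}u,\sqrt{N}v)$ I would prove, via a two-dimensional Laplace estimate over the feasible quadrant $\{x\ge u,\,y\ge v\}$, that the exponential rate equals $\mathcal I(\rho;u,v):=\min\{\tfrac12(x,y)\Sigma_\rho^{-1}(x,y)^{\top}:x\ge u,\ y\ge v\}$ ($\Sigma_\rho$ the $2\times2$ covariance of a standard bivariate normal with correlation $\rho$), split into three regimes: (a) if $u,v>0$, $u\ge\rho v$ and $v\ge\rho u$ the minimiser is the vertex $(u,v)$ and $\Psi_2\sim\frac{1-\rho^2}{2\pi N(u-\rho v)(v-\rho u)}e^{-N\mathcal I}$, $\mathcal I=\frac{u^2-2\rho uv+v^2}{2(1-\rho^2)}$ (prefactor $N^{-1}$); (b) if $u>0$ and $v<\rho u$ the second constraint is slack and $\Psi_2=\Psi(\sqrt{N}u)(1+o(1))$; (c) if $u\le0$ or $v\le0$ one coordinate tends to $-\infty$ and $\Psi_2$ reduces to the univariate $\Psi$ of the other coordinate (to $1$ if that coordinate is $<0$, to $\tfrac12$ if it is $0$), the refined order in the last situation coming from $\Psi_2(\rho;\sqrt{N}u,\sqrt{N}v)=1-\Psi(\sqrt{N}(-u))-\Psi(\sqrt{N}(-v))+\mathbb{P}(X\le\sqrt{N}u,Y\le\sqrt{N}v)$ with the joint term of strictly larger rate. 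Alongside I would record the algebraic identities that drive the bookkeeping: with $f(t)=(a_1+c_1t)^2/(2t)$, $g(t)=(a_2+c_2t)^2/(2t)$ one has $f(t_1)=2a_1c_1$, $g(t_2)=2a_2c_2$, and — because the barriers meet at $t^{*}$, so $a_1+c_1t^{*}=a_2+c_2t^{*}$ and $c_1t^{*}-a_1=(a_2-2a_1)+c_2t^{*}$ — also $f(t^{*})=g(t^{*})$, the chain $2a_1c_1+\frac{(c_1t^{*}-a_1)^2}{2t^{*}}=2a_2c_2+\frac{(a_2-c_2t^{*})^2}{2t^{*}}=2(a_1(c_1-2c_2)+a_2c_2)+\frac{((a_2-2a_1)-c_2t^{*})^2}{2t^{*}}=f(t^{*})$, and $2a_1c_1+\frac{((a_2-2a_1)+c_2T)^2}{2T}=2(a_1(c_1-2c_2)+a_2c_2)+\frac{((a_2-2a_1)-c_2T)^2}{2T}$.

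The third step is the subcase analysis. In each subcase one reads off, from the assumed order of $t^{*},t_1,t_2,\tilde t,T$, the sign of every linear form $u,v$, hence the exact exponential rate and the polynomial order ($N^{0}$, $N^{-1/2}$ or $N^{-1}$) of each of the six terms, and then checks that one term (or, in the genuinely two-dimensional subcases, a small group) strictly dominates. In $(i,b),(i,c),(ii),(iii,b),(iii,d),(iii,e),(iv),(v)$ the survivor is a single term with prefactor $e^{-2a_1c_1N}$, $e^{-2a_2c_2N}$ or $e^{-2(a_1(c_1-2c_2)+a_2c_2)N}$ (according to which of $t_1,t_2$ or the mixed configuration is "internal") whose attached $\Psi$ or $\Psi_2$ tends to a constant; this constant is $1$ except on the boundaries $T=t_1$, $t^{*}=t_1$, $T=\tilde t$, $t^{*}=\tilde t$, $t^{*}=t_2$, where the governing argument equals $0$ and produces the factor $\tfrac12$. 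In the remaining subcases the survivors combine: $(i,a)$ is the rescaled one-dimensional tail $(\ref{1dim_explicit})$, its two univariate terms adding to rate $f(T)$; in $(iii,a)$ the leading $1$'s inside the $e^{-2a_1c_1N}$-bracket cancel, and the remainder together with the $e^{-2(a_1(c_1-2c_2)+a_2c_2)N}$-term give two terms of common rate $2a_1c_1+((a_2-2a_1)+c_2T)^2/(2T)$; in $(iii,c)$ four terms — the prefactor-free bivariate one, the survivor of the $e^{-2a_1c_1N}$-bracket, the $e^{-2a_2c_2N}$-term, and the $e^{-2(a_1(c_1-2c_2)+a_2c_2)N}$-term — all carry rate $f(t^{*})=g(t^{*})$, and the identities above force their $N^{-1/2}$-coefficients to add up to exactly the four-fraction expression in the statement.

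The main obstacle is not any single estimate but the organisation of this last step. Subcase by subcase one must: pin down the signs of up to six linear forms from the orderings of $t^{*},t_1,t_2,\tilde t,T$ — which sometimes needs a non-obvious relation, e.g.\ $t^{*}=t_1$ forces $(2a_1-a_2)/c_2=t^{*}$, so that $(2a_1-a_2)-c_2T<0$ throughout that subcase; track the cancellations of leading $1$'s inside the $e^{-2a_1c_1N}$- and $e^{-2a_2c_2N}$-brackets, which occur precisely when the two inner $\Psi$'s both tend to $1$; and certify, using the quadratic identities together with elementary monotonicity of $f$, $g$ and their shifted analogues, that the claimed rate is genuinely the smallest of the six. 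The only analytically substantive point, the vertex Laplace asymptotics of $\Psi_2(\rho;\sqrt{N}u,\sqrt{N}v)$ in regime (a), is classical and would be isolated as an auxiliary lemma.
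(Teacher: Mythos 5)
Your proposal is correct and follows essentially the same route as the paper: Brownian scaling to reduce $\psi_T(N)$ to $\pi_T$ at rescaled arguments, substitution of Theorem \ref{tw1}, term-by-term asymptotics of the univariate and bivariate normal tails, and the quadratic rate identities at $t^{*}$, $t_1$, $t_2$, $\tilde t$ to identify the dominant terms and combine coefficients (the paper merely cites Lemma \ref{l2} from \cite{bivariate} for the $\Psi_2$ asymptotics instead of proving a Laplace lemma, and shortcuts cases $(i)$ and $(v)$ by sandwiching $\psi_T(N)$ between one-dimensional probabilities rather than running the full six-term decomposition). One small caveat: your regime (c) claim that $\Psi_2$ always reduces to the univariate tail of the positive coordinate when the other argument is nonpositive fails for sufficiently negative $\rho$ (cf. Lemma \ref{l2}$[3(iii)]$), but that configuration never occurs among the dominant terms here, and your underlying constrained-minimization principle gives the correct classification in every subcase.
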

Theorem \ref{tw2} generalizes Theorems $3.3$ and $3.4$ in \cite{mandjes}, where the large-buffer asymptotics for  $T=\infty$ was considered. Indeed using self-similarity of Brownian motion, we obtain
\begin{eqnarray*}
    \psi_\infty(N) &:=&\mathbb{P} \left( \sup_{t \geq 0} \left(B(t) - c_1\sqrt{N}t\right) > a_1\sqrt{N}, \sup_{t \geq 0} \left(B(t) - c_2\sqrt{N}t\right) > a_2\sqrt{N}\right)\\
    &=& \mathbb{P}\left( \sup_{t \geq 0 } \left(B(t)-c_1t\right) > a_1N,\text{ }\sup_{t \geq 0 } \left(B(t)-c_2t\right) > a_2N\right),
\end{eqnarray*}
which was analyzed in \cite{mandjes} under the condition that $N \to \infty.$
\section{Proofs}\label{proofs}
In this section we give detailed proofs of theorems presented in Section \ref{main_results}.
\subsection{Proofs of Theorems \ref{tw1_1dim} and \ref{tw1}}
\begin{proof}[Proof of Theorem \ref{tw1_1dim}]
Note that $a_2+c_2t \geq a_1+c_1t,$ if $t^* \geq t.$ Thus
$$\pi_{T}(a_1,a_2) = \mathbb{P}\left( \sup_{t \in [0,T]} \left(B(t) - c_2 t\right) > a_2 \right)$$
and the proof follows by (\ref{1dim_explicit}).
\end{proof}
\begin{proof}[Proof of Theorem \ref{tw1}] 
Let $y_1 := a_1+c_1t^* = a_2+c_2t^*$ and $y_2 := a_2+c_2T.$\\ 
We have
\begin{equation}\label{representation}
\begin{split}
    \pi_{T}(a_1,a_2) &= 1- \mathbb{P}(\forall{t} \in [0,T]: B(t) \leq a_1+c_1t) - \mathbb{P}(\forall{t} \in [0,T]: B(t) \leq a_2+c_2t) \\
    &+ \mathbb{P}(\forall{t} \in [0,T]: B(t) \leq (a_1+c_1t)\wedge(a_2+c_2t))\\
    &= \mathbb{P}\left(\sup_{t \in [0,T]}\left(B(t) - c_1t\right) > a_1\right) + \mathbb{P}\left(\sup_{t \in [0,T]} \left(B(t) - c_2t\right) > a_2\right) -1 + \mathcal{P}(T),
\end{split}
\end{equation}
where
\begin{align*}
    \mathcal{P}(T) & = \mathbb{P}\left(\forall{t} \in [0,T]: B(t) \leq \left(a_1+c_1t\right) \wedge \left(a_2+c_2t\right) \right)\\
    &= \mathbb{P}\left(\forall{t} \in [0,t^*): B(t) \leq \left(a_1+c_1t\right), \forall{t} \in [t^*,T]: B(t) \leq \left(a_2+c_2t\right) \right).
\end{align*}
Conditioning with respect to $B(t^*)$ and using independence of increments of $\{B(t) : t \geq 0\}$ we get
\begin{align*}
    \mathcal{P}(T)&= \int_{-\infty}^{y_1}\phi\left(t^*; x_1\right) \mathbb{P}\left(\forall{t} \in [0, t^*): B(t) \leq a_1+c_1t| B(t^*) = x_1\right)\\
    & \qquad \times \mathbb{P}\left(\forall{t} \in [t^*,T]: B(t-t^*) \leq a_2-x_1+c_2t\right) dx_1.
\end{align*}
Next, substituting $t:= t-t^*$ and conditioning on $B(T-t^*),$ we obtain
\begin{align*}
    \mathcal{P}(T)&=\int_{-\infty}^{y_1}\int_{-\infty}^{y_2-x_1}  \phi\left(t^*; x_1\right)  \phi(T-t^*; x_2) \mathbb{P}\left(\forall{t} \in [0, t^*): B(t) \leq a_1+c_1t| B(t^*) = x_1\right) \\
    & \qquad \times \mathbb{P}\left(\forall{t} \in [0,T-t^*]: B(t) \leq a_2-x_1+c_2(t^*+t)|B(T-t^*) = x_2\right) dx_2 dx_1.
\end{align*}
Further,
\begin{align*}
    \mathcal{P}(T)& = \int_{-\infty}^{y_1} \int_{-\infty}^{y_2-x_1} \phi\left(t^*; x_1\right) \phi(T-t^*; x_2) \mathbb{P}\left(\sup\limits_{t\in[0,t^*)} \left(B(t)-\frac{t}{t^*}B(t^*) + C_1\frac{t}{t^*}\right) \leq a_1\right) \\
    &  \qquad  \times \mathbb{P}\left(\sup\limits_{t \in [0,T-t^*]} \left( B(t)-\frac{t}{T-t^*}B(T-t^*) + C_2\frac{t}{T-t^*}\right) \leq y_1-x_1\right) dx_2 dx_1,
\end{align*}
where $C_1 := x_1-c_1t^*$ and $C_2 :=x_2-c_2(T-t^*).$\\
Since $$\mathbb{P}\left( \sup\limits_{t\in[0,L]}\left(B(t) - \frac{t}{L}B(L) + y \frac{t}{L}\right) < b \right) = 1-e^{-\frac{2b\left(b-y\right)}{L}}, \text{ if } L>0 \text{ and }b-y \geq 0,$$
(see, e.g., \cite{Salminen}), we obtain
\begin{align*}
    \mathcal{P}(T)& =
 \int_{-\infty}^{y_1} \int_{-\infty}^{y_2-x_1} \phi\left(t^*; x_1\right) \phi(T-t^*; x_2) \left(1-e^{-2\frac{a_1(y_1-x_1)}{t^*}}\right)  \left(1-e^{-2\frac{(y_1-x_1)(y_2-x_1-x_2)}{T-t^*} }\right)dx_2 dx_1\\
    & = \int_{-\infty}^{y_1} \int_{-\infty}^{y_2-x_1} \phi\left(t^*; x_1\right) \phi(T-t^*; x_2) dx_2 dx_1\\
    & - \int_{-\infty}^{y_1} \int_{-\infty}^{y_2-x_1} \phi\left(t^*; x_1\right) \phi(T-t^*; x_2) e^{-2\frac{a_1(y_1-x_1)}{t^*}} dx_2 dx_1\\
    & - \int_{-\infty}^{y_1} \int_{-\infty}^{y_2-x_1} \phi\left(t^*; x_1\right) \phi(T-t^*; x_2) e^{-2\frac{(y_1-x_1)(y_2-x_1-x_2)}{T-t^*}}dx_2 dx_1\\
    & + \int_{-\infty}^{y_1} \int_{-\infty}^{y_2-x_1} \phi\left(t^*; x_1\right) \phi(T-t^*; x_2) e^{-2 \frac{a_1(y_1-x_1)}{t^*}}e^{-2\frac{(y_1-x_1)(y_2-x_1-x_2)}{T-t^*}}dx_2 dx_1\\
    & =: I_1(T) - I_2(T) - I_3(T) + I_4(T).
\end{align*}
Next we calculate $I_1(T),$ $I_2(T),$ $I_3(T)$ and $I_4(T).$\\
We have
\begin{align*}
    I_1(T) &:= \int_{-\infty}^{y_1} \int_{-\infty}^{y_2-x_1} \phi(t^{*}; x_1) \phi(T-t^{*}; x_2) dx_2 dx_1 \\ 
    &= \mathbb{P}\left(B(t^*) \leq y_1, (B(T)-B(t^*))+B(t^*) \leq y_2\right)
    = \Phi_2\left(\sqrt{\frac{t^*}{T}};\frac{a_1+c_1t^*}{\sqrt{t^*}}, \frac{a_2+c_2T}{\sqrt{T}}\right).
\end{align*}
Substitution $z_1 := x_1-2a_1,$ $z_2 := x_2$ leads straightforwardly to
\begin{align*}
    I_2(T) &:=  \int_{-\infty}^{y_1} \int_{-\infty}^{y_2-x_1} \phi(t^{*}; x_1) \phi(T-t^{*}; x_2) e^{-2\frac{a_1(y_1-x_1)}{t^{*}}} dx_2 dx_1 \\
    & = e^{-2a_1c_1} \Phi_2\left(\sqrt{\frac{t^*}{T}};\frac{-a_1+c_1t^*}{\sqrt{t^*}}, \frac{(a_2-2a_1)+c_2T}{\sqrt{T}}\right).
\end{align*}
Similarly, substitution $z_1 := x_1-2c_2t^*,$ $z_2 := 2x_1+x_2-2y_1$ and the fact that $a_1+c_1t^* = a_2+c_2t^*$ give us
\begin{align*}
     I_3(T) &:= \int_{-\infty}^{y_1} \int_{-\infty}^{y_2-x_1} \phi(t^*; x_1) \phi(T-t^*; x_2) e^{-2\frac{(y_1-x_1)(y_2-x_1-x_2)}{T-t^*}} dx_2 dx_1\\
    & = e^{-2a_2c_2} \Phi_2\left(-\sqrt{\frac{t^*}{T}};\frac{a_1+(c_1-2c_2)t^*}{\sqrt{t^*}}, \frac{-a_2+c_2T}{\sqrt{T}}\right).
\end{align*}
In a similar way we obtain
\begin{align*}
    I_4(T) &:= \int_{-\infty}^{y_1} \int_{-\infty}^{y_2-x_1} \phi(t^*; x_1) \phi(T-t^*; x_2)
    e^{-2\frac{(y_1-x_1)(y_2-x_1-x_2)}{T-t^*} } e^{-2\frac{a_1(y_1-x_1)}{t^*} } dx_2 dx_1\\
    & =  e^{-2(a_1(c_1-2c_2)+a_2c_2)} \Phi_2\left(-\sqrt{\frac{t^*}{T}};\frac{-a_1+(c_1-2c_2)t^*}{\sqrt{t^*}}, \frac{(2a_1-a_2)+c_2T}{\sqrt{T}}\right).
\end{align*}
Applying the above calculations and (\ref{1dim_explicit}) to (\ref{representation}) we obtain
\begin{align*}
    \pi_{T}(a_1,a_2) &= \Phi\left(-\frac{c_1T+a_1}{\sqrt{T}}\right) + e^{-2a_1c_1} \Phi\left(\frac{c_1T-a_1}{\sqrt{T}}\right) \\
    &+ \Phi\left(-\frac{c_2T+a_2}{\sqrt{T}}\right) + e^{-2a_2c_2} \Phi\left(\frac{c_2T-a_2}{\sqrt{T}}\right) \\
    & - \left(1-\Phi_2 \left(\sqrt{\frac{t^*}{T}}; \frac{c_1t^*+a_1}{\sqrt{t^*}}, \frac{c_2T+a_2}{\sqrt{T}}\right)\right) \\
    &- e^{-2a_1c_1} \Phi_2 \left(\sqrt{\frac{t^*}{T}}; \frac{c_1t^*-a_1}{\sqrt{t^*}}, \frac{c_2T+(a_2-2a_1)}{\sqrt{T}} \right)\\
    & -  e^{-2a_2c_2} \Phi_2 \left(-\sqrt{\frac{t^*}{T}}; \frac{(c_1-2c_2)t^*+a_1}{\sqrt{t^*}}, \frac{c_2T-a_2}{\sqrt{T}}\right) \\
    & + e^{-2(a_1(c_1-2c_2)+a_2c_2)}
        \Phi_2 \left(-\sqrt{\frac{t^*}{T}}; \frac{(c_1-2c_2)t^*-a_1}{\sqrt{t^*}} , \frac{c_2T+(2a_1-a_2)}{\sqrt{T}}\right).
\end{align*}
Finally, rewriting the above in a language of functions $\Psi$ and $\Psi_2$ completes the proof.
\end{proof}
\subsection{Proof of Theorem \ref{large.buffer}}
\begin{proof}[Proof of Theorem \ref{large.buffer}]
We observe that $t^{*} = \frac{1-a}{c_1-c_2}b \to \infty,$ as $b \to \infty.$ Then, without loss of generality, we shall assume $t^{*} > T.$ Hence $$\pi_T(ab, b) = \mathbb{P}\left(\sup\limits_{t \in [0,T]} (B(t)-c_2t)>b\right)\left(1+o(1)\right), \text{ as } b \to \infty.$$
The thesis follows from combination (\ref{1dim_explicit}) and (\ref{l1}). 
\end{proof}
\subsection{Proofs of Theorems \ref{tw21} and \ref{tw2}}
\begin{proof}[Proof of Theorem \ref{tw21}]
The proof follows by a straightforward combination of Theorem \ref{tw1_1dim}, (\ref{1dim_explicit}) and (\ref{l1}). 
\end{proof}
Before proceeding to the proof of the Theorem \ref{tw2}, we need the following lemma which gives the asymptotics of the joint survival function of a bivariate normal distribution. Its proof can be found in \cite{bivariate}.
\begin{lemma}{\text{ } }\label{l2}\label{l3}\label{l4}\label{l5}\label{l6}\label{l7}\label{l8}
\begin{itemize}
    \item[$(1)$] Let $\alpha > 0.$ 
    \begin{enumerate}
        \item[$(i)$] If $\rho \in [-1,1],$ then $\Psi_2\left(\rho; \alpha t, -\alpha t\right) = \frac{1}{\alpha t\sqrt{2\pi}} e^{-\frac{1}{2}\alpha^2 t^2}\left(1+o(1)\right), \text{ as } t \to \infty.$
    \end{enumerate}
    \item[$(2)$] Let $\alpha < 0,$ $\beta > 0,$ $|\alpha| > \beta.$ 
    \begin{enumerate}
        \item[$(i)$] If $\rho \in [-1,1],$ then
 $\Psi_2\left(\rho; \alpha t, \beta t\right) = \frac{1}{\beta t \sqrt{2 \pi}} e^{-\frac{1}{2}\beta^2 t^2}\left(1+o(1)\right), \text{ as } t \to \infty.$
    \end{enumerate}
    \item[$(3)$] Let $\alpha < 0,$ $\beta >0,$ $|\alpha| < \beta.$
    \begin{enumerate}
     \item[$(i)$] If $\rho > \frac{\alpha}{\beta},$ then $\Psi_2\left(\rho; \alpha t, \beta t\right) = \frac{1}{\beta t \sqrt{2 \pi}} e^{-\frac{1}{2}\beta^2 t^2}\left(1+o(1)\right),$ as $t \to \infty;$
     \item[$(ii)$] If $\rho = \frac{\alpha}{\beta},$ then $\Psi_2\left(\rho; \alpha t, \beta t\right) \leq \frac{1}{2} \Psi\left(\beta t \right)  = \frac{1}{2}\frac{1}{\beta t \sqrt{2 \pi}} e^{-\frac{1}{2}\beta^2 t^2}\left(1+o(1)\right),$ as $t \to \infty;$
     \item[$(iii)$] If $\rho < \frac{\alpha}{\beta},$ then $\Psi_2\left(\rho; \alpha t, \beta t\right) \leq \frac{\sqrt{1-\rho^2}}{2 \pi |\alpha-\rho\beta|\beta t^2} e^{-\frac{1}{2}\left[\frac{\alpha^2+\beta^2-2\rho\alpha\beta}{1-\rho^2}\right]t^2},$ as $t \to \infty.$
    \end{enumerate}
    \item[$(4)$]  Let $\alpha>0,$ $\beta>0,$ $\alpha \leq \beta.$
    \begin{enumerate}
    \item[$(i)$] If $\rho > \frac{\alpha}{\beta},$ then 
    $\Psi_2\left(\rho; \alpha t, \beta t\right) = \frac{1}{\beta t \sqrt{2 \pi} } e^{-\frac{1}{2}\beta^2t^2}\left(1+o(1)\right), \text{ as } t \to \infty.$
    \item[$(ii)$] If $\rho = \frac{\alpha}{\beta},$ then 
    $\frac{1}{2}\Psi\left(\beta t\right) \leq \Psi_2\left(\rho; \alpha t, \beta t\right) \leq \Psi\left(\beta t\right), \text{ for } t >0.$
    \item[$(iii)$] If $\rho < \frac{\alpha}{\beta},$ then 
    $\Psi_2\left(\rho; \alpha t, \beta t\right) = \frac{(1-\rho^2)^{\frac{3}{2}}\alpha^2 \beta^2}{2 \pi t^2 (\alpha - \rho \beta)(\beta - \rho \alpha)} e^{-\frac{1}{2}\left[\frac{\alpha^2+\beta^2-2\rho\alpha\beta}{1-\rho^2}\right]t^2}\left(1+o(1)\right), \text{ as } t \to \infty.$
    \end{enumerate}
 \item[$(5)$] Let $\beta > 0.$
    \begin{enumerate}
        \item[$(i)$] If $\rho>0,$ then $\Psi_2\left(\rho; 0, \beta t\right)  = \frac{1}{\beta t \sqrt{2 \pi}} e^{-\frac{1}{2}\beta^2 t^2}\left(1+o(1)\right),$ as $t \to \infty;$
        \item[$(ii)$] If $\rho =0,$ then $\Psi_2\left(\rho; 0, \beta t\right)  = \frac{1}{2} \frac{1}{\beta t \sqrt{2 \pi}} e^{-\frac{1}{2}\beta^2t^2}\left(1+o(1)\right),$ as $t \to \infty;$
        \item[$(iii)$] If $\rho <0,$ then $\Psi_2\left(\rho; 0, \beta t\right)  \leq \Psi\left(\beta t \right) \Phi\left(\frac{-\rho \beta t}{\sqrt{1-\rho^2}}\right),$ as $t \to \infty.$
    \end{enumerate}
\end{itemize}
\end{lemma}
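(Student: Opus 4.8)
The plan is to organise the items according to where the Gaussian exponent is minimised over the relevant quadrant. Writing the density of $(X,Y)$ as $\varphi_\rho(x,y)=\frac{1}{2\pi\sqrt{1-\rho^2}}\,e^{-Q_\rho(x,y)/2}$ with $Q_\rho(x,y)=\frac{x^2-2\rho xy+y^2}{1-\rho^2}$, and letting $p,q$ denote the two arguments of $\Psi_2$, the gradient of $Q_\rho$ at the corner $(p,q)$ is a positive multiple of $(p-\rho q,\,q-\rho p)$. Hence exactly one of two situations occurs: the \emph{corner regime}, in which both entries of this vector are eventually nonnegative and $Q_\rho$ is minimised over $\{x\ge p,\,y\ge q\}$ at $(p,q)$ itself; and the \emph{one-sided regime}, in which one entry is negative, the minimiser slides to $(\rho q,q)$ or $(p,\rho p)$, and $Q_\rho$ there equals $q^2$, resp.\ $p^2$. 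After a possible reflection $X\mapsto -X$ (which replaces $\rho$ by $-\rho$) and/or an exchange of the two coordinates, every item of the lemma falls under one of these two headings; throughout, one-dimensional Gaussian tails are rewritten via (\ref{l1}).

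In the corner regime I would obtain the bound by a direct Laplace computation. The substitution $x=p+u$, $y=q+v$, together with the exact identity
\[
Q_\rho(p+u,q+v)=Q_\rho(p,q)+\frac{2}{1-\rho^2}\bigl((p-\rho q)u+(q-\rho p)v\bigr)+Q_\rho(u,v),
\]
and the rescaling $\widetilde u=\frac{(p-\rho q)u}{1-\rho^2}$, $\widetilde v=\frac{(q-\rho p)v}{1-\rho^2}$, gives
\[
\Psi_2(\rho;p,q)=\varphi_\rho(p,q)\,\frac{(1-\rho^2)^2}{(p-\rho q)(q-\rho p)}\int_0^\infty\!\!\int_0^\infty e^{-\widetilde u-\widetilde v}\,e^{-Q_\rho(u,v)/2}\,d\widetilde u\,d\widetilde v.
\]
Since $e^{-Q_\rho(u,v)/2}\le 1$, this already produces the non-asymptotic bound $\Psi_2(\rho;p,q)\le\frac{(1-\rho^2)^{3/2}}{2\pi(p-\rho q)(q-\rho p)}e^{-Q_\rho(p,q)/2}$; and since $u,v\to 0$ for each fixed $(\widetilde u,\widetilde v)$ as $p\wedge q\to\infty$---so that $e^{-Q_\rho(u,v)/2}\to 1$ while remaining bounded by $e^{-\widetilde u-\widetilde v}$---dominated convergence upgrades this to an asymptotic equality, the double integral tending to $1$. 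Taking $(p,q)=(\alpha t,\beta t)$ and using $Q_\rho(\alpha t,\beta t)=\frac{\alpha^2-2\rho\alpha\beta+\beta^2}{1-\rho^2}t^2$ then yields item $(4)(iii)$, and, after weakening the constant by the elementary inequality $\beta-\rho\alpha\ge\beta(1-\rho^2)$ (equivalent to $\rho\le\alpha/\beta$), the upper bound of item $(3)(iii)$.

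For the one-sided regime I would condition on the coordinate carrying the binding constraint. Taking it to be $Y>q$, one has $\Psi_2(\rho;p,q)=\int_q^\infty\Psi\!\left(\frac{p-\rho y}{\sqrt{1-\rho^2}}\right)\phi(1;y)\,dy$. Either the hypotheses make the conditional factor monotone on $[q,\infty)$ with minimum attained at $y=q$ and tending to $1$ there (which is exactly the case $p-\rho q\to-\infty$), so that $\Psi_2(\rho;p,q)$ is squeezed between $\Psi\!\left(\frac{p-\rho q}{\sqrt{1-\rho^2}}\right)\Psi(q)$ and $\Psi(q)$; or one passes to the complement $\Psi(q)-\Psi_2(\rho;p,q)$, which after the reflection $X\mapsto -X$ equals $\Psi_2(-\rho;-p,q)$---either a corner-regime quantity or again a one-sided one, in both cases with exponential rate strictly larger than $q^2/2$, the relevant identity being $\frac{\alpha^2-2\rho\alpha\beta+\beta^2}{1-\rho^2}-\beta^2=\frac{(\alpha-\rho\beta)^2}{1-\rho^2}>0$ off the critical value $\rho=\alpha/\beta$ (with an analogous inequality in item $(1)$, whose complement is $\Psi_2(-\rho;\alpha t,\alpha t)$, of rate $\frac{\alpha^2 t^2}{1-\rho}>\frac{\alpha^2 t^2}{2}$). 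Either route gives that $\Psi_2(\rho;p,q)$ equals the one-dimensional tail at the binding threshold up to a factor $1+o(1)$, which settles items $(1)$, $(2)$, $(3)(i)$, $(4)(i)$ and $(5)(i)$; the explicit forms then follow from (\ref{l1}). The critical items come out of the same conditional representation by monotonicity alone: when $\rho=\alpha/\beta$ the conditional factor equals $\frac{1}{2}$ at $y=q$ and is monotone in $y$, so $\Psi_2$ is pinned between $\frac{1}{2}\Psi(q)$ and $\Psi(q)$ (items $(3)(ii)$, $(4)(ii)$); item $(5)(iii)$ follows by bounding $\mathbb{P}(X>0\mid Y=y)$ above by its value at $y=q$; and item $(5)(ii)$ is plain independence. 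The degenerate correlations $\rho=\pm1$ do not arise in the applications of Section \ref{main_results} (there $|\rho|<1$) and can be set aside.

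The only genuinely analytic step is the corner-regime asymptotic: one must check that the rescaling is legitimate---i.e.\ that $p-\rho q$ and $q-\rho p$ both diverge, which they do in that regime---and that the dominated-convergence passage is uniform; everything else is elementary. The bulk of the work is organisational: assigning each item, after the appropriate reflection and exchange, to the corner or to the one-sided regime, and checking that the complement arguments terminate, which they do because in the one-sided regime only the (strictly larger) exponential rate of the complement is needed, not its full asymptotics. The one place where a subtlety bites is the one-sided regime with $\rho<0$, where the conditional factor is not favourably monotone and the squeeze fails: there the complement must be used, and it is precisely the separation of $\rho$ from $\alpha/\beta$---through the displayed identity---that makes the complement strictly cheaper in the exponent.
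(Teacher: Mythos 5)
The paper does not actually prove this lemma---it is quoted from Elnaggar \& Mukherjea \cite{bivariate}---so your self-contained argument is necessarily a different route, and it is essentially sound. The corner/one-sided dichotomy, the exact expansion of $Q_\rho$ at the corner combined with the exponential rescaling, and the conditional representation $\Psi_2(\rho;p,q)=\int_q^\infty\Psi\bigl(\tfrac{p-\rho y}{\sqrt{1-\rho^2}}\bigr)\phi(1;y)\,dy$ together cover every item, and your treatment of the genuinely delicate subcase (the one-sided regime with $\rho<0$, where the squeeze fails and one must pass to the complement and invoke $\tfrac{\alpha^2+\beta^2-2\rho\alpha\beta}{1-\rho^2}-\beta^2=\tfrac{(\alpha-\rho\beta)^2}{1-\rho^2}>0$) is exactly right. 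Three remarks. First, your exclusion of $\rho=\pm1$ is not merely convenient but necessary: item $(1)$ as printed fails at $\rho=-1$, where $\Psi_2(-1;\alpha t,-\alpha t)=0$; since the paper only applies the lemma with $\rho=\pm\sqrt{t^*/T}\in(-1,1)$, setting these endpoints aside is the correct repair. Second, your corner-regime constant $\frac{(1-\rho^2)^{3/2}}{2\pi(\alpha-\rho\beta)(\beta-\rho\alpha)t^2}$ is the correct one (check $\rho=0$: independence gives $\Psi(\alpha t)\Psi(\beta t)\sim\frac{1}{2\pi\alpha\beta t^2}e^{-(\alpha^2+\beta^2)t^2/2}$), whereas item $(4)(iii)$ as printed carries a spurious factor $\alpha^2\beta^2$; this is a typo in the lemma, harmless because $(4)(iii)$ is never invoked in the proofs of Theorems \ref{tw21} and \ref{tw2}. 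Third, two phrasings are loose but repairable: in case $(2)$ the complement $\mathbb{P}(X\le\alpha t,\,Y>\beta t)$ should simply be bounded by the first marginal $\Psi(|\alpha|t)$, whose rate $\alpha^2/2$ exceeds $\beta^2/2$ by hypothesis---your ``corner or one-sided'' classification of the complement is unnecessary there and, taken literally, would not suffice if the complement were one-sided in the $Y$-coordinate; and in item $(3)(ii)$ the conditional factor is \emph{decreasing} with value $\tfrac12$ at $y=\beta t$, so the conclusion is the one-sided bound $\Psi_2\le\tfrac12\Psi(\beta t)$ rather than a two-sided pinning. None of these affects the validity of the argument.
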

\begin{proof}[Proof of Theorem \ref{tw2}] 
Using that $\sum\limits_{k=1}^{N} B_k(t) =_{d} \sqrt{N}B(t),$ we obtain 
\BQNY
\psi_T(N) &=& \mathbb{P}\left(\sup_{ t \in [0,T]} \left(\sqrt{N}B(t) - c_1Nt\right) > a_1N, \sup_{ t \in [0,T]} \left(\sqrt{N}B(t) - c_2Nt\right) > a_2N \right)\\
& = &\mathbb{P}\left(\sup_{ t \in [0,T]} \left(B(t) - c_1\sqrt{N}t\right) > a_1\sqrt{N}, \sup_{ t \in [0,T]} \left(B(t) - c_2\sqrt{N}t\right) > a_2\sqrt{N} \right).
\EQNY
We divide the proof on five cases.

\underline{\textit{Case $(i):$ $t^*<t_1.$}}
We have
$$\psi_T(N) \geq \mathbb{P}\left(\sup_{t \in [0, T]} \left(B(t) - c_1\sqrt{N} t\right) > a_1\sqrt{N}\right) - \mathbb{P}\left(\sup_{t \in [0,t^*]} \left(B(t) - c_1\sqrt{N} t\right) > a_1\sqrt{N}\right)$$
and 
$$\psi_T(N) \leq \mathbb{P}\left(\sup_{t \in [0, T]} \left(B(t) - c_1\sqrt{N} t\right) > a_1\sqrt{N}\right).$$
Formulas (\ref{1dim_explicit}) and  (\ref{l1}) imply, as $N \to \infty,$
$$\mathbb{P}\left(\sup_{t \in [0,t^*]} \left(B(t) - c_1\sqrt{N} t\right) > a_1\sqrt{N}\right) = \frac{1}{\sqrt{2\pi}}\left(\frac{\sqrt{t^*}}{a_1+c_1 t^*}+\frac{\sqrt{t^*}}{a_1-c_1 t^*} \right) \frac{1}{\sqrt{N}}e^{-\frac{(a_1+c_1t^*)^2}{2t^*}N}\left(1+o(1)\right).$$ 
Asymptotics of $\mathbb{P}\left(\sup\limits_{t \in [0,T]} \left(B(t) - c_1\sqrt{N} t\right) > a_1\sqrt{N}\right)$ depends on the relation between $t_1$ and $T,$ leading to three subcases.

\underline{\textit{Case $(i, a):$ $t^*<t_1$ and $T<t_1.$}}
By formulas (\ref{1dim_explicit}) and  (\ref{l1}), as $N \to \infty,$
$$\mathbb{P}\left(\sup_{t \in [0,T]} \left(B(t) - c_1\sqrt{N} t\right) > a_1\sqrt{N}\right) =  \frac{1}{\sqrt{2\pi}}\left(\frac{\sqrt{T}}{a_1+c_1 T}+\frac{\sqrt{T}}{a_1-c_1 T} \right) \frac{1}{\sqrt{N}}e^{-\frac{(a_1+c_1T)^2}{2T}N}\left(1+o(1)\right) .$$
Furthermore, since $\left(\frac{a_1+c_1 t^*}{\sqrt{t^*}}\right)^2 > \left(\frac{a_1+c_1T}{\sqrt{T}}\right)^2$ then as $N \to \infty$
$$\mathbb{P}\left(\sup_{t \in [0,t^*]} \left(B(t) - c_1\sqrt{N} t\right) > a_1\sqrt{N}\right) = o\left(\mathbb{P}\left(\sup_{t \in [0,T]} \left(B(t) - c_1\sqrt{N} t\right) > a_1\sqrt{N}\right)\right).$$ 
Thus, as $N \to \infty,$
$$\psi_T(N) =   \frac{1}{\sqrt{2\pi}}\left(\frac{\sqrt{T}}{a_1+c_1 T}+\frac{\sqrt{T}}{a_1-c_1 T} \right) \frac{1}{\sqrt{N}}e^{-\frac{(a_1+c_1T)^2}{2T}N}\left(1+o(1)\right).$$

\underline{\textit{Case $(i, b):$ $t^*<t_1$ and $T=t_1.$}}
Formulas (\ref{1dim_explicit}) and  (\ref{l1}) imply, as $N \to \infty,$
$$\mathbb{P}\left(\sup_{t \in [0,T]} \left(B(t) - c_1\sqrt{N} t\right) > a_1\sqrt{N}\right) =  \frac{1}{2}e^{-2a_1c_1N}\left(1+o(1)\right) .$$
Furthermore, since $\left(\frac{a_1+c_1 t^*}{\sqrt{t^*}}\right)^2/2 > \left(\frac{a_1+c_1T}{\sqrt{T}}\right)^2/2 = 2a_1c_1$ then as $N \to \infty$
$$\mathbb{P}\left(\sup_{t \in [0,t^*]} \left(B(t) - c_1\sqrt{N} t\right) > a_1\sqrt{N}\right) = o\left(\mathbb{P}\left(\sup_{t \in [0,T]} \left(B(t) - c_1\sqrt{N} t\right) > a_1\sqrt{N}\right)\right).$$ 
Thus, as $N \to \infty,$
$$\psi_T(N) = \frac{1}{2}e^{-2a_1c_1N}\left(1+o(1)\right).$$

\underline{\textit{Case $(i, c):$ $t^*<t_1$ and $T>t_1.$}}
Formulas (\ref{1dim_explicit}) and  (\ref{l1}) imply, as $N \to \infty,$
$$\mathbb{P}\left(\sup_{t \in [0,T]} \left(B(t) - c_1\sqrt{N} t\right) > a_1\sqrt{N}\right) = e^{-2a_1c_1N}\left(1+o(1)\right) .$$
Furthermore, since $\left(\frac{a_1+c_1 t^*}{\sqrt{t^*}}\right)^2/2 > 2a_1c_1$ then as $N \to \infty$
$$\mathbb{P}\left(\sup_{t \in [0,t^*]} \left(B(t) - c_1\sqrt{N} t\right) > a_1\sqrt{N}\right) = o\left(\mathbb{P}\left(\sup_{t \in [0,T]} \left(B(t) - c_1\sqrt{N} t\right) > a_1\sqrt{N}\right)\right).$$ 
Thus, as $N \to \infty,$
$$\psi_T(N) = e^{-2a_1c_1N}\left(1+o(1)\right).$$
\underline{\textit{Cases $(ii)$, $(iii)$ and $(iv):$ $t_1 \leq t^* \leq t_2.$}}
By Theorem \ref{tw1} we get
\BQNY
\psi_T(N)&=& \Psi \left(\frac{a_1+c_1T}{\sqrt{T}}\sqrt{N}\right) - \Psi_2 \left(-\sqrt{\frac{t^*}{T}}; \frac{a_1+c_1t^*}{\sqrt{t^*}}\sqrt{N}, -\frac{a_2+c_2T}{\sqrt{T}}\sqrt{N}\right) \\
        && + e^{-2a_1c_1N} \left(\Psi\left(\frac{a_1-c_1T}{\sqrt{T}}\sqrt{N}\right) - \Psi_2\left(\sqrt{\frac{t^*}{T}}; \frac{a_1-c_1t^*}{\sqrt{t^*}}\sqrt{N}, \frac{(2a_1-a_2)-c_2T}{\sqrt{T}}\sqrt{N}\right) \right)\\
        && +  e^{-2a_2c_2N}\Psi_2 \left(\sqrt{\frac{t^*}{T}}; \frac{a_2-c_2t^*}{\sqrt{t^*}}\sqrt{N}, \frac{a_2-c_2T}{\sqrt{T}}\sqrt{N}\right) \\
        &&+ e^{-2(a_1(c_1-2c_2)+a_2c_2)N} \Psi_2\left(-\sqrt{\frac{t^*}{T}}; \frac{(2a_1-a_2)+c_2t^*}{\sqrt{t^*}}\sqrt{N}, \frac{(a_2-2a_1)-c_2T}{\sqrt{T}}\sqrt{N}\right).
\EQNY

For the sake of brevity, write
\begin{flalign*}
 \alpha_0 &:= \frac{a_1+c_1T}{\sqrt{T}};& \alpha_1 &:= \frac{a_1+c_1t^*}{\sqrt{t^*}}; & \alpha_2 &:= \frac{a_1-c_1t^*}{\sqrt{t^*}}; & \alpha_3 & := \frac{a_2-c_2 t^*}{\sqrt{t^*}}; & \alpha_4 & := \frac{(2a_1-a_2)+c_2t^*}{\sqrt{t^*}};\\
\beta_0 &:= \frac{a_1-c_1T}{\sqrt{T}};& \beta_1 &:= \frac{a_2+c_2T}{\sqrt{T}}; & \beta_2 &:= \frac{(2a_1-a_2)-c_2T}{\sqrt{T}}; &\beta_3 & := \frac{a_2-c_2T}{\sqrt{T}}; & \beta_4 & := \frac{(a_2-2a_1)-c_2T}{\sqrt{T}}.
\end{flalign*}
Formula (\ref{l1}) and fact that $\alpha_0^2/2 = 2a_1c_1  + \beta_0^2/2$ imply, as $N \to \infty,$
$$\Psi \left(\alpha_0 \sqrt{N} \right) = \frac{1}{\sqrt{N}\sqrt{2\pi}} \frac{1}{\alpha_0} e^{-\frac{\alpha_0^2}{2}N}\left(1+o(1)\right)$$
and 
$$e^{-2a_1c_1N}\left(1-\Psi \left(\beta_0\sqrt{N}\right)\right)  =\frac{1}{\sqrt{N}\sqrt{2\pi}} \frac{1}{-\beta_0} e^{-\frac{\alpha_0^2}{2}N}\left(1+o(1)\right).$$
Lemma \ref{l2} $[1, 2, 3(i)]$ implies, as $N \to \infty,$
$$ \Psi_2 \left( -\sqrt{\frac{t^*}{T}}; \alpha_1\sqrt{N}, -\beta_1\sqrt{N}\right) = \frac{1}{\sqrt{N}\sqrt{2\pi}}\frac{1}{\alpha_1}e^{-\frac{\alpha_1^2}{2}N}\left(1+o(1)\right).$$ 
\underline{\textit{Case $(ii): t_1=t^*.$}} Lemma \ref{l2} $[5(iii)]$ implies, as $N \to \infty,$
$$e^{-2a_1c_1N}\Psi_2 \left( \sqrt{\frac{t^*}{T}}; \alpha_2 \sqrt{N}, \beta_2\sqrt{N} \right) = \frac{1}{2} e^{-2a_1c_1N}(1+o(1)).$$ 
Lemma \ref{l2} $[1, 2, 3(i), 4(i), 5(i)]$ and fact that $\alpha_1^2/2 = 2a_2c_2+\alpha_3^2/2$ imply, as $N \to \infty,$
$$e^{-2a_2c_2N}\Psi_2 \left(\sqrt{\frac{t^*}{T}}; \alpha_3\sqrt{N}, \beta_3\sqrt{N} \right)= \frac{1}{\sqrt{N}\sqrt{2\pi}}\frac{1}{\alpha_3}e^{-\frac{\alpha_1^2}{2}N}(1+o(1)).$$
Moreover, Lemma \ref{l2} $[1, 2, 3(i)]$ and fact that $\alpha_1^2/2 = 2(a_1(c_1-2c_2)+a_2c_2) + \alpha_4^2/2$ imply, as $N \to \infty,$ 
$$e^{-2(a_1(c_1-2c_2)+a_2c_2)N} \Psi_2 \left(-\sqrt{\frac{t^*}{T}}; \alpha_4\sqrt{N}, \beta_4\sqrt{N} \right)= \frac{1}{\sqrt{N}\sqrt{2\pi}}\frac{1}{\alpha_4}e^{-\frac{\alpha_1^2}{2}N}.$$
Furthermore $\alpha_0^2/2 > \alpha_1^2/2 = 2a_1c_1.$ Thus, as $N \to \infty,$
$$\psi_T(N) = \frac{1}{2}e^{-2a_1c_1N}(1+o(1)).$$
\underline{\textit{Case $(iii): t_1<t^*<t_2.$}} Formula (\ref{l1}), Lemma \ref{l2} $[1, 2, 3(i)]$ and fact that $\alpha_1^2/2 = 2a_1c_1 +\alpha_2^2/2$ imply, as $N \to \infty,$
$$e^{-2a_1c_1N}\left(1-\Psi_2 \left( \sqrt{\frac{t^*}{T}}; \alpha_2 \sqrt{N}, \beta_2\sqrt{N} \right) \right) = \left(\frac{1}{\sqrt{N}\sqrt{2\pi}}\frac{1}{-\beta_2}e^{-\frac{\beta_2^2+4a_1c_1}{2}N}+\frac{1}{\sqrt{N}\sqrt{2\pi}}\frac{1}{-\alpha_2}e^{-\frac{\alpha_1^2}{2}N}\right)(1+o(1)).$$ 
Lemma \ref{l2} $[1, 2, 3(i), 4(i), 5(i)]$  and fact that $\alpha_1^2/2 = 2a_2c_2+\alpha_3^2/2$ imply, as $N \to \infty,$
$$e^{-2a_2c_2N}\Psi_2 \left(\sqrt{\frac{t^*}{T}}; \alpha_3\sqrt{N}, \beta_3\sqrt{N} \right)= \frac{1}{\sqrt{N}\sqrt{2\pi}}\frac{1}{\alpha_3}e^{-\frac{\alpha_1^2}{2}N}(1+o(1)).$$
\noindent
Below we analyze five different scenarios depending on the relation between $\tilde{t},$ $t^*$ and $T.$\\
\underline{\textit{Case $(iii,a):$ $t_1<t^*<t_2$ and $T<\tilde{t}.$}} Lemma \ref{l2} $[2]$ and fact that $\beta_4^2/2 + 2(a_1(c_1-2c_2)+a_2c_2)= \beta_2^2/2+2a_1c_1$ imply, as $N \to \infty,$ 
$$e^{-2(a_1(c_1-2c_2)+a_2c_2)N} \Psi_2 \left(-\sqrt{\frac{t^*}{T}}; \alpha_4\sqrt{N}, \beta_4\sqrt{N} \right)=\frac{1}{\sqrt{N}\sqrt{2\pi}}\frac{1}{\beta_4}e^{-\frac{\beta_2^2+4a_1c_1}{2}N}\left(1+o\left(1\right)\right).$$
Furthermore $\alpha_0^2/2 > \alpha_1^2/2 > \beta_2^2/2+2a_1c_1.$ Thus, as $N \to \infty,$
$$\psi_T(N) = \frac{1}{\sqrt{2\pi}}\left(\frac{\sqrt{T}}{(a_2-2a_1)-c_2T}-\frac{\sqrt{T}}{(2a_1-a_2)-c_2T}\right)\frac{1}{\sqrt{N}}e^{-\frac{\left(2a_1-a_2-c_2T\right)^2+4a_1c_1T}{2T}N}\left(1+o(1)\right).$$
\underline{\textit{Case $(iii,b):$ $t_1<t^*<t_2$ and $\tilde{t}=T.$}}
Lemma \ref{l2} $[5(i)]$ and fact that $\alpha_1^2/2= \alpha_4^2/2 + 2(a_1(c_1-2c_2)+a_2c_2)$ imply, as $N \to \infty,$ 
$$e^{-2(a_1(c_1-2c_2)+a_2c_2)N} \left(\frac{1}{2} - \Psi_2 \left(-\sqrt{\frac{t^*}{T}}; \alpha_4\sqrt{N}, \beta_4\sqrt{N} \right) \right)= \frac{1}{\sqrt{N}\sqrt{2\pi}}\frac{1}{-\alpha_4}e^{-\frac{\alpha_1^2}{2}N}(1+o(1)).$$
Furthermore $\alpha_0^2/2 > \alpha_1^2/2 >2\left(a_1\left(c_1-2c_2\right) + a_2c_2\right).$ Thus, as $N \to \infty,$
$$\psi_T(N) = \frac{1}{2} e^{-2\left(a_1\left(c_1-2c_2\right) + a_2c_2\right)N}(1+o(1)).$$
\underline{\textit{Case $(iii,c):$ $t_1<t^*<t_2$ and $\tilde{t} < t^*.$}}
Lemma \ref{l2} $[1, 2, 3(i)]$ and fact that $\alpha_1^2/2 = \alpha_4^2/2 +  2(a_1(c_1-2c_2)+a_2c_2)$ imply, as $N \to \infty,$ 
$$e^{-2(a_1(c_1-2c_2)+a_2c_2)N} \Psi_2 \left(-\sqrt{\frac{t^*}{T}}; \alpha_4\sqrt{N}, \beta_4\sqrt{N} \right)= \frac{1}{\sqrt{N}\sqrt{2\pi}}\frac{1}{\alpha_4}e^{-\frac{\alpha_1^2}{2}N} \left(1+o\left(1\right)\right).$$
Furthermore $\beta_2^2/2+2a_1c_1 > \alpha_1^2/2$ and $\alpha_0^2/2> \alpha_1^2/2.$
Thus, as $N \to \infty,$
$$\psi_T(N) = \frac{1}{\sqrt{2\pi}} \left(-\frac{1}{\alpha_1}+\frac{1}{-\alpha_2}+\frac{1}{\alpha_3}-\frac{1}{-\alpha_4}\right) \frac{1}{\sqrt{N}}e^{-\frac{\alpha_1^2}{2}N}(1+o(1)).$$

\underline{\textit{Case $(iii, d):$ $t_1<t^*<t_2$ and $t^*=\tilde{t}.$}}  
 Lemma \ref{l2} $[5(i)]$ and fact that $\beta_4^2/2 + 2(a_1(c_1-2c_2)+a_2c_2)= \beta_2^2/2+2a_1c_1$ imply, as $N \to \infty,$ 
$$e^{-2(a_1(c_1-2c_2)+a_2c_2)N} \left(\frac{1}{2}- \Psi_2 \left(-\sqrt{\frac{t^*}{T}}; \alpha_4\sqrt{N}, \beta_4\sqrt{N} \right) \right)=\frac{1}{\sqrt{N}\sqrt{2\pi}}\frac{1}{-\beta_4}e^{-\frac{\beta_2^2+4a_1c_1}{2}N}\left(1+o\left(1\right)\right).$$
Furthermore $\alpha_0^2/2 > \alpha_1^2/2 = 2\left(a_1\left(c_1-2c_2\right) + a_2c_2\right)$ and $\beta_2^2/2+2a_1c_1> 2\left(a_1\left(c_1-2c_2\right) + a_2c_2\right).$ Thus, as $N \to \infty,$
$$\psi_T(N) = \frac{1}{2}e^{-2(a_1(c_1-2c_2)+a_2c_2)N}\left(1+o(1)\right).$$
\underline{\textit{Case $(iii, e):$ $t_1<t^*<t_2$ and $t^*<\tilde{t}<T.$}}
Formula (\ref{l1}), Lemma \ref{l2} $[1, 2, 3(i)],$ facts that $\alpha_1^2/2 = 2(a_1(c_1-2c_2)+a_2c_2) + \alpha_4^2/2$ and $\beta_4^2/2 + 2(a_1(c_1-2c_2)+a_2c_2)= \beta_2^2/2+2a_1c_1$ imply, as $N \to \infty,$ 
\begin{align*}
e^{-2(a_1(c_1-2c_2)+a_2c_2)N}&\left(1-\Psi_2 \left(-\sqrt{\frac{t^*}{T}}; \alpha_4\sqrt{N}, \beta_4\sqrt{N} \right)\right)\\
&= \left(\frac{1}{\sqrt{N}\sqrt{2\pi}}\frac{1}{-\alpha_4}e^{-\frac{\alpha_1^2}{2}N} - \frac{1}{\sqrt{N}\sqrt{2\pi}}\frac{1}{-\beta_4}e^{-\frac{\beta_2^2+4a_1c_1}{2}N}\right)(1+o(1)).
\end{align*}
Furthermore $\alpha_0^2/2 > \alpha_1^2/2 >  2\left(a_1\left(c_1-2c_2\right) + a_2c_2\right)$ and $\beta_2^2/2+2a_1c_1> 2\left(a_1\left(c_1-2c_2\right) + a_2c_2\right).$ Thus, as $N \to \infty,$
$$\psi_T(N) = e^{-2\left(a_1\left(c_1-2c_2\right) + a_2c_2\right)N}(1+o(1)).$$
\underline{\textit{Case $(iv):$ $t_1<t_2=t^*.$}} Formula (\ref{l1}), Lemma \ref{l2} $[1, 2, 3(i)]$ and fact that $\alpha_1^2/2 = 2a_1c_1 +\alpha_2^2/2$ imply, as $N \to \infty,$
$$e^{-2a_1c_1N}\left(1-\Psi_2 \left( \sqrt{\frac{t^*}{T}}; \alpha_2 \sqrt{N}, \beta_2\sqrt{N} \right) \right) = \left(\frac{1}{\sqrt{N}\sqrt{2\pi}}\frac{1}{-\beta_2}e^{-\frac{\beta_2^2+4a_1c_1}{2}N}+\frac{1}{\sqrt{N}\sqrt{2\pi}}\frac{1}{-\alpha_2}e^{-\frac{\alpha_1^2}{2}N}\right)(1+o(1)).$$ 
Lemma \ref{l2} $[5(iii)]$  and fact that $\alpha_1^2/2 = 2a_2c_2+\alpha_3^2/2$ imply, as $N \to \infty,$
$$e^{-2a_2c_2N}\Psi_2 \left(\sqrt{\frac{t^*}{T}}; \alpha_3\sqrt{N}, \beta_3\sqrt{N} \right)=\frac{1}{2}e^{-2a_2c_2N} (1+o(1)).$$
Lemma \ref{l2} $[1, 2, 3(i)]$ and fact that $\alpha_1^2/2 = 2(a_1(c_1-2c_2)+a_2c_2) + \alpha_4^2/2$ imply, as $N \to \infty,$ 
$$e^{-2(a_1(c_1-2c_2)+a_2c_2)N} \Psi_2 \left(-\sqrt{\frac{t^*}{T}}; \alpha_4\sqrt{N}, \beta_4\sqrt{N} \right)= \frac{1}{\sqrt{N}\sqrt{2\pi}}\frac{1}{\alpha_4}e^{-\frac{\alpha_1^2}{2}N} \left(1+o\left(1\right)\right).$$
Furthermore $\alpha_0^2/2> \alpha_1^2/2 = 2a_2c_2$ and $\beta_2^2/2+2a_1c_1 > \alpha_1^2/2.$ Thus, as $N \to \infty,$
$$\psi_T(N) = \frac{1}{2} e^{-2a_2c_2N}(1+o(1)).$$
\underline{\textit{Case $(v):$ $t_2<t^*.$}}
We have
$$\psi_T(N) \geq \mathbb{P}\left(\sup_{t \in [0,t^*]} \left(B(t) - c_2\sqrt{N} t\right) > a_2\sqrt{N}\right)$$
and 
$$\psi_T(N) \leq \mathbb{P}\left(\sup_{t \in [0, T]} \left(B(t) - c_2\sqrt{N} t\right) > a_2\sqrt{N}\right).$$
Formulas (\ref{1dim_explicit}) and  (\ref{l1}) imply, as $N \to \infty,$
$$\mathbb{P}\left(\sup_{t \in [0,t^*]} \left(B(t) - c_2\sqrt{N} t\right) > a_2\sqrt{N}\right) = e^{-2a_2c_2N}\left(1+o\left( 1\right)\right)$$ 
and 
$$\mathbb{P}\left(\sup_{t \in [0,T]} \left(B(t) - c_2\sqrt{N} t\right) > a_2\sqrt{N}\right) = e^{-2a_2c_2N} \left(1+o\left( 1\right)\right).$$
Thus, as $N \to \infty,$
$$\psi_T(N) =  e^{-2a_2c_2N}\left(1+o\left( 1\right)\right).$$
This completes the proof.
\end{proof}
\section*{Acknowledgments}
I would like to thank Krzysztof D\c{e}bicki for many stimulating discussions and helpful remarks during the preparation of this paper. The work was partially supported by NCN Grant No 2018/31/B/ST1/00370 (2019-2022).

\end{document}